\titleformat{\subsection}[hang]
  {\normalfont\bfseries}{\thesubsection}{1em}{}
      \newtheorem{theorem}{Theorem}
      \newtheorem{lemma}[theorem]{Lemma}
      \newtheorem{problem}[theorem]{Problem}
      \newtheorem{corollary}[theorem]{Corollary}
      \newtheorem{proposition}[theorem]{Proposition}
      \newtheorem{conjecture}[theorem]{Conjecture}
      \newtheorem{definition}[theorem]{Definition}
\def\ex{{\rm{ex}}}
\def\TT{{\mathcal T}}
\def\Tab{{Let $T$ be a tree on $s+t$ vertices and let ${\mathcal T}=T(a,b)$, its $(a,b)$-blowup. }}
\begin{document}

\pagestyle{myheadings}
\markright{{\small{\sc F\"uredi, Jiang, Kostochka, Mubayi, and Verstra\"ete:   Hypergraph blowups of trees}}}

\title{\vspace{-0.5in} Extremal problems for hypergraph blowups of trees\footnote{This
project started at SQuaRE workshop at the American Institute of Mathematics.}}

\author{
{\large{Zolt\'an F\"uredi}}\thanks{
\footnotesize {Alfr\'ed R\'enyi Institute of Mathematics, Budapest, Hungary.
E-mail:  \texttt{z-furedi@illinois.edu}.
Research  partially supported by grant KH130371 from the National Research, Development and Innovation Office NKFIH.}}
\and
{\large{Tao Jiang}}\thanks{
\footnotesize {Department of Mathematics, Miami University, Oxford, OH 45056,
E-mail:  \texttt{jiangt@miamioh.edu}.
Research partially supported by National Science Foundation award DMS-1400249.}}
\and
{\large{Alexandr Kostochka}}\thanks{
\footnotesize {University of Illinois at Urbana--Champaign, Urbana, IL 61801
 and Sobolev Institute of Mathematics, Novosibirsk 630090, Russia. E-mail: \texttt {kostochk@math.uiuc.edu}.
 Research supported in part by NSF grant  DMS-1600592 and  by grants 18-01-00353A and 19-01-00682
 of the Russian Foundation for Basic Research.
}}
\and
{\large{Dhruv Mubayi}}\thanks{
\footnotesize {Department of Mathematics, Statistics, and Computer Science, University of Illinois at Chicago, Chicago, IL 60607.
E-mail:  \texttt{mubayi@uic.edu.}
Research partially supported by NSF awards DMS-1300138 and 1763317.}}
\and{\large{Jacques Verstra\"ete}}\thanks{Department of Mathematics, University of California at San Diego, 9500
Gilman Drive, La Jolla, California 92093-0112, USA. E-mail: {\tt jverstra@math.ucsd.edu.} Research supported by NSF awards DMS-1556524
and DMS-1800332.}}

\date{March 2, 2020}
\maketitle
\vspace{-0.3in}

\begin{abstract}

In this paper we present a novel approach in extremal set theory which may be viewed as an asymmetric version of Katona's permutation method. We use it to find more Tur\'an numbers of hypergraphs in the Erd\H{o}s--Ko--Rado range.

An $(a,b)$-path $P$ of length $2k-1$ consists of $2k-1$ sets of size $r=a+b$ as follows.
Take $k$ pairwise disjoint $a$-element sets $A_0, A_2, \dots, A_{2k-2}$ and other $k$ pairwise disjoint $b$-element sets
 $B_1, B_3, \dots, B_{2k-1}$ and order them linearly as $A_0, B_1, A_2, B_3, A_4\dots$.
Define the (hyper)edges of $P_{2k-1}(a,b)$ as the sets of the form $A_i\cup B_{i+1}$ and $B_j\cup A_{j+1}$.
The members of $P$ can be represented as $r$-element intervals of the $ak+bk$ element underlying set.

Our main result is about hypergraphs that are blowups of trees, and implies that for fixed $k,a,b$, as $n\to \infty$
\[ \ex_r(n,P_{2k-1}(a,b)) = (k - 1){n \choose r - 1} +
o(n^{r - 1}).\]

This generalizes the Erd\H os--Gallai theorem for graphs which is the case of $a=b=1$.
We also determine the asymptotics when $a+b$ is even; the remaining cases are still open.
\end{abstract}

\section{Paths}

\subsection{Definitions concerning $r$-uniform hypergraphs, Two constructions}
An $r$-uniform hypergraph, or simply {\em $r$-graph}, is a family of $r$-element subsets of a finite set.
We associate an $r$-graph $F$ with its edge set and call its vertex set $V(F)$.
Usually we take $V(F)=[n]$, where $[n]$ is the set of first $n$ integers, $[n]:=\{ 1, 2, 3,\dots, n\}$.
We also use the notation $F\subseteq \binom{[n]}{r}$.
For a hypergraph $H$, a vertex subset $C$ of $H$ that intersects all edges of $H$ is called
a {\em vertex cover} of $H$.
Let $\tau(H)$ be the minimum size of a vertex cover of $H$.
Let $\Psi_c(n,r)$ be the $r$-graph with vertex set $[n]$ consisting of all $r$-edges meeting $[c]$.
Then $\Psi$ has the maximum number of $r$-sets such that $\tau(\Psi)\leq c$.
When $r$ and $c$ are fixed and $n\to \infty$,
\begin{equation}\label{eq:psi}
  |\Psi_c(n,r)|=   \binom{n}{r}- \binom{n-c}{r}    =\, c{n \choose r - 1} + o(n^{r - 1}).
  \end{equation}

A {\em crosscut} of a hypergraph $H$ is a set $X \subset V(H)$ such that $|e \cap X| = 1$ for all $e \in H$.
Not all hypergraphs have crosscuts.
Let $\sigma(H)$ denote the smallest size of a crosscut in
a hypergraph $H$ with at least one crosscut.
Clearly $\tau(H)\le \sigma(H)$, since a crosscut is
a vertex cover. Here strict inequality may hold, as shown by a double star whose adjacent
centers have high degrees.
Define $ \Psi^1_{c}(n,r):=\{ E\subset [n] : |E|=r,  |E  \cap [c] |= 1\}$, so it
consists of all $r$-sets intersecting a fixed $c$-element subset of $V(H)$ at {\em exactly} one vertex.
Then for large enough $n$, $\Psi^1$ has the maximum number of $r$-sets such that $\sigma(\Psi^1)\leq c$.
Let us refer to this hypergraph as the {\em crosscut construction}.
When $r$ and $c$ are fixed and $n\to \infty$,
\begin{equation}\label{eq:psi1}
  |\Psi^1_c(n,r)|=   c \binom{n-c}{r-1} =\, c{n \choose r - 1} + o(n^{r - 1}).
  \end{equation}

Given an $r$-graph $F$,
let $\ex_r(n,F)$ denote the maximum number of edges in an $r$-graph on $n$ vertices that does not contain a copy of $F$ (if the uniformity is obvious from context, we may omit the subscript $r$).
Crosscuts were introduced in~\cite{FF} to get the following obvious lower bounds
\begin{equation}\label{eq3}
\ex(n,F) \geq |\Psi_{\tau(F)-1}(n,r)|,\quad \text{and if crosscut exists then }\enskip  \ex(n,F)
\geq |\Psi^1_{\sigma(F)-1}(n,r)|.
\end{equation}

\medskip

{\bf Notation.} If $H$ is a hypergraph and $e \subset
V(H)$, the {\em neighborhood} of $e$ is $\Gamma_H(e) =
\{f\setminus e : e \subseteq f$, $f \in H\}$ and the {\em degree} of $e$ is
$d_H(e) = |\Gamma_H(e)|$. For an integer $ p$, let  the {\em $p$-shadow}, $\partial_p
H$, be the collection of $p$-sets that lie in some edge of $H$. If $H$ is an $r$-graph, then the
$(r-1)$-shadow of $H$ is simply called the {\em shadow} and is denoted by  $\partial H$.

Whenever we write $f(n)\sim g(n)$, we always mean $\lim _{n \to \infty }f(n)/g(n)=1$ while the other variables of $f$ and $g$
are fixed. This is the case even if the variable $n$ is not indicated.

\medskip

{\bf Aims of this paper.}
We have two aims.
First,  to find more Tur\'an numbers (or estimates) of  hypergraphs in the Erd\H{o}s--Ko--Rado range.
We are especially interested in cases when the excluded configuration is 'dense', it has only a few vertices of degree one.
Second, we present an asymmetric version of Katona's permutation method, when we first solve (estimate) the problem only on a wellchosen substructure.
The $(a,b)$-blowups of trees and paths  are good examples for both of our aims.

\subsection{Paths in graphs}
A fundamental result in extremal graph theory is the
Erd\H os--Gallai Theorem~\cite{EG}, that
\begin{equation}\label{eq:2}
 \ex_2(n,P_\ell) \leq \frac{1}{2}(\ell - 1)n, \end{equation}
where $P_\ell$ is the $\ell$-{\it edge path}. (Warning! This is a non-standard notation).
Equality holds in~\eqref{eq:2} if and only if $\ell$ divides $n$ and all connected
 components of $G$ are $\ell$-vertex complete graphs.
The Tur\'an function $\ex(n,P_\ell)$ was determined exactly for every $\ell$ and $n$ by
Faudree and Schelp~\cite{FaudreeSchelpJCT75} and independently by Kopylov~\cite{Kopylov}.
Let $n\equiv r$ {\rm (mod $\ell$)}, $0\le r< \ell$. 
Then
$ \ex(n ,P_\ell)= \frac{1}{2} (\ell-1)n - \frac{1}{2}r(\ell-r)$.
They  also described the extremal graphs which are either
\hfill\break\phantom{~}\hskip\parindent
  --- vertex disjoint unions of $\lfloor n/\ell \rfloor$ complete graphs $K_{\ell}$ and a $K_r$, or
\hfill\break\phantom{~}\hskip\parindent
  --- $\ell$ is odd,  $\ell= 2k-1$, and $r=k$ or $k-1$. Then other extremal graphs with completely different structure can be obtained by taking a vertex disjoint union of $m$ copies of $K_{\ell}$ ($0\le m < \lfloor n/\ell\rfloor $) and a copy of $\Psi_{k-1}(n-m\ell, 2)$, i.e., an $(n-m\ell)$-vertex graph with a $(k-1)$-set meeting all edges.

This variety of extremal graphs makes the solution difficult.

We generalize these theorems for some hypergraph paths and trees.

\subsection{Paths in hypergraphs}

{\bf Paths of length $2$.} \quad
Two $r$-sets with intersection size $b$ can be considered as a hypergraph path $P_2(a,b)$ of length two, where $a+b=r$, and $1\leq a, b\leq r-1$.
If $H\subset \binom{[n]}{r}$ is $P_2(1,r-1)$-free then the obvious inequality $r |H|=|\partial(H)|\leq \binom{n}{r-1}$ yields the upper bound in the following result:
\begin{equation}\label{eq:3}
   \frac{1}{r}\binom{n}{r-1}-O(n^{r-2})< P(n, r, r-1)=\ex_r(n, P_2(1,r-1)) \leq  \frac{1}{r}\binom{n}{r-1}.
    \end{equation}
Here for any given $r$ equality holds if $n$ is sufficiently large ($n > n_0(r)$) and certain divisibility conditions are satisfied (see,
Keevash~\cite{Keevash}).

The case $b=1$ was solved asymptotically by Frankl~\cite{Frankl1977} and the general case was handled in~\cite{FF85}. 
\begin{equation}\label{eq:4}
   \ex_r(n, P_2(a,b)) = \Theta\left( n^{\max\{ a-1,b \}}\right). 
   \end{equation} 
Here the right hand side of~\eqref{eq:4} is $o(n^{r-1})$ (for $1\leq a, b\leq r-1$). 

Two disjoint $r$-sets can be considered as a $P_2(r,0)$ so~\eqref{eq:4} also holds for $a=r$ since
 the maximum size of an intersecting family of $r$-sets is $\binom{n-1}{r-1}$ for $n\geq 2r$
 by the Erd\H{o}s-Ko-Rado theorem~\cite{EKR}.

\medskip
{\bf Definition.}\quad
Suppose that $a,b,\ell$ are positive integers, $r=a+b$.
The  $(a,b)$-{\em path} $P_{\ell}(a,b)$ {\em of length} $\ell$ is an $r$-uniform hypergraph obtained from a (graph) path $P_\ell$ by blowing up its vertices to  $a$-sets and $b$-sets. More precisely,
  an $(a,b)$-path $P_{\ell}(a,b)$ of length $2k-1$ consists of $2k-1$ sets of size $r=a+b$ as follows.
Take $2k$ pairwise disjoint sets  $A_0, A_2, \dots, A_{2k-1}$ with $|A_i|=a$ and
 $B_1, B_3, \dots, B_{2k-1}$ with $|B_j|=b$ and define the (hyper)edges of $P_{2k-1}(a,b)$
as the sets of the form $A_i\cup B_{i+1}$ and $B_j\cup A_{j+1}$.
If the $ak+bk$ elements are ordered linearly, then the members of $P$ can be represented as intervals of length $r$.
By adding one more set $A_{2k}$ to the underlying set together with the hyperedge $B_{2k-1}\cup A_{2k}$ we obtain the
 $(a,b)$-path of even length, $P_{2k}(a,b)$.

\medskip\begin{center}
    \includegraphics[scale=0.3]{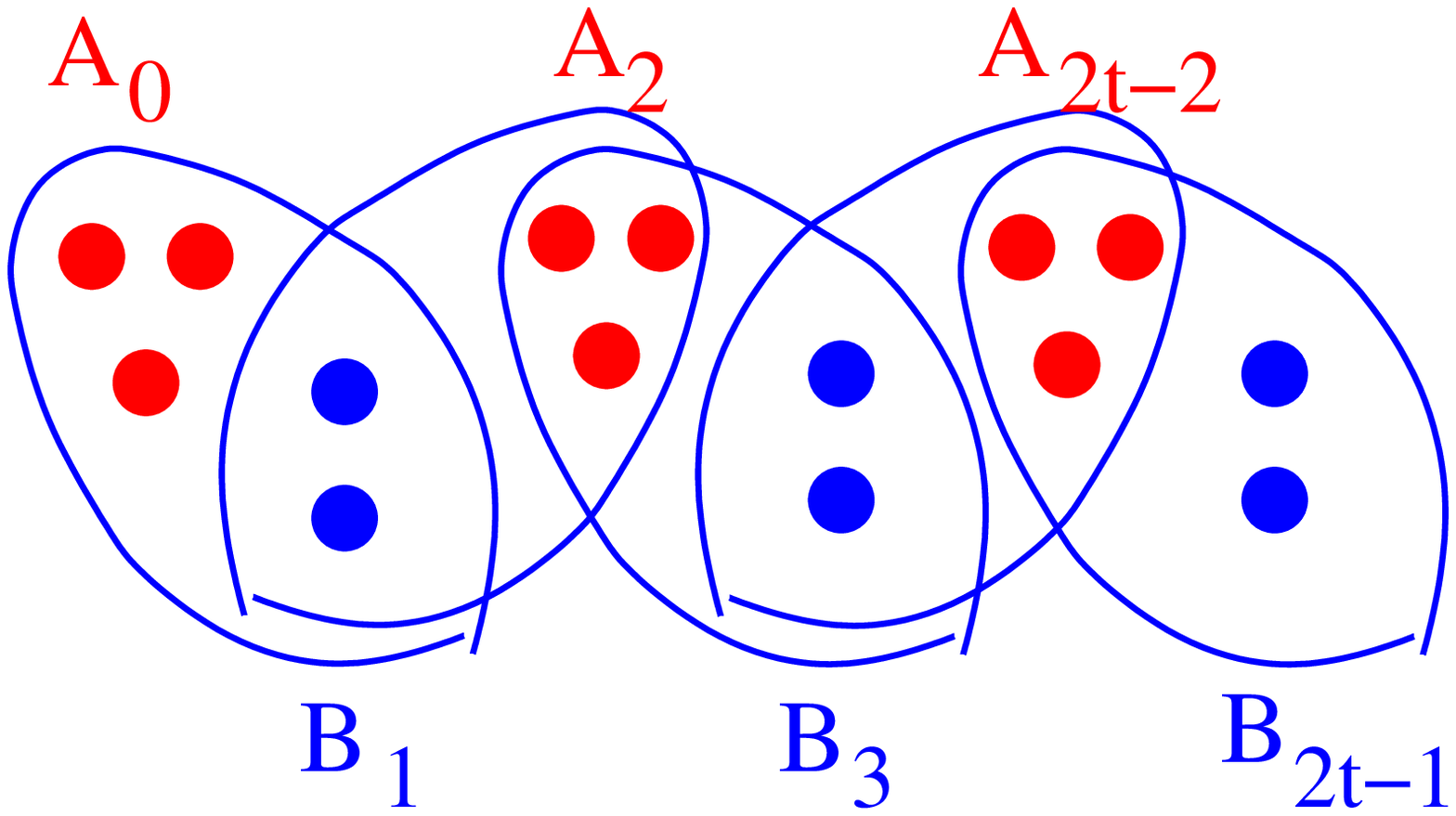} \quad\quad  \includegraphics[scale=0.3]{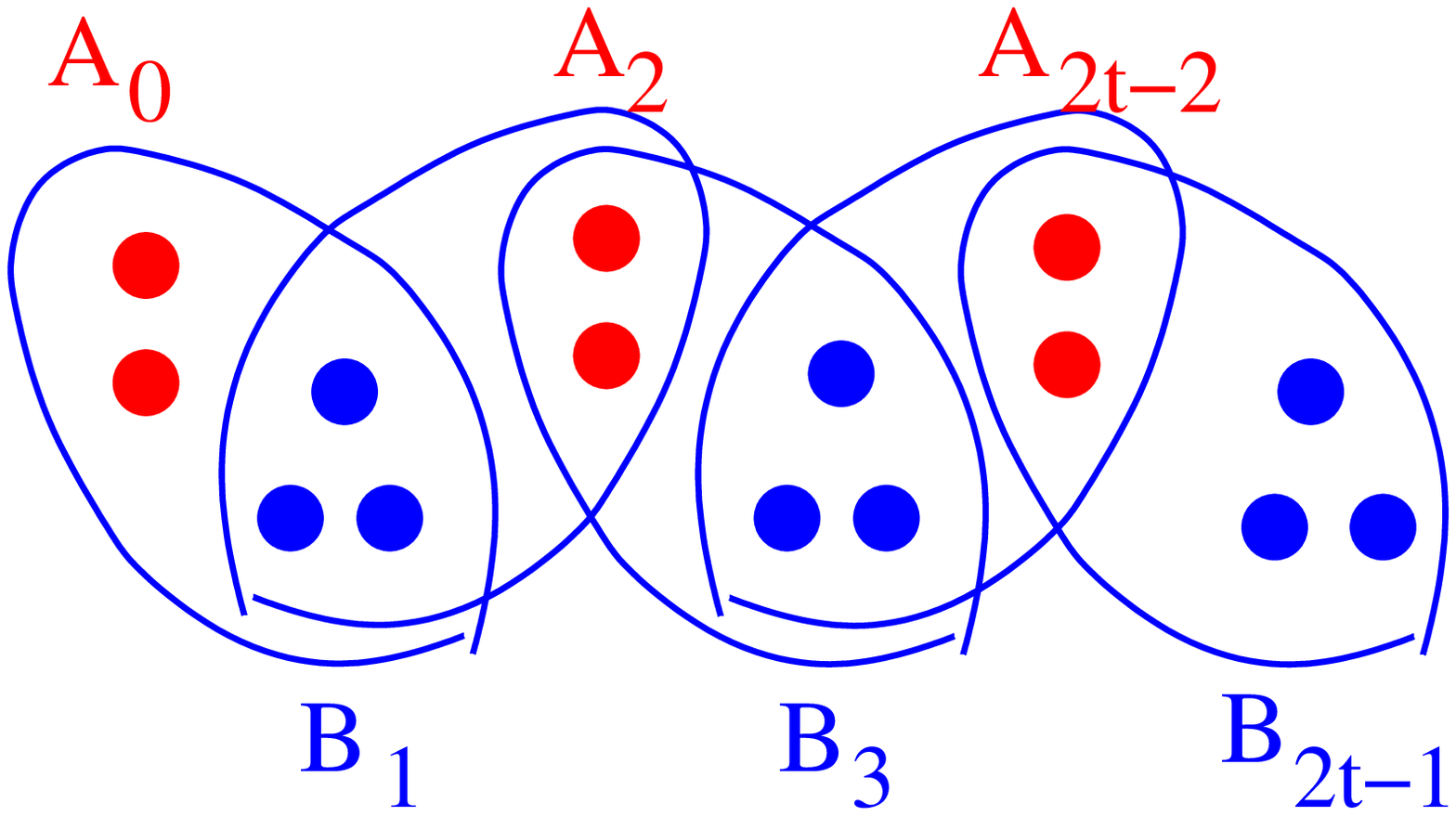} 
\quad\quad\quad  $P_5(3,2)=P_5(2,3)$.\end{center}

While $P_{2k-1}(a,b)=P_{2k-1}(b,a)$ we have that $P_{2k}(a,b)\neq P_{2k}(b,a)$\enskip  for $a\neq b$.

\begin{center}
    \includegraphics[scale=0.3]{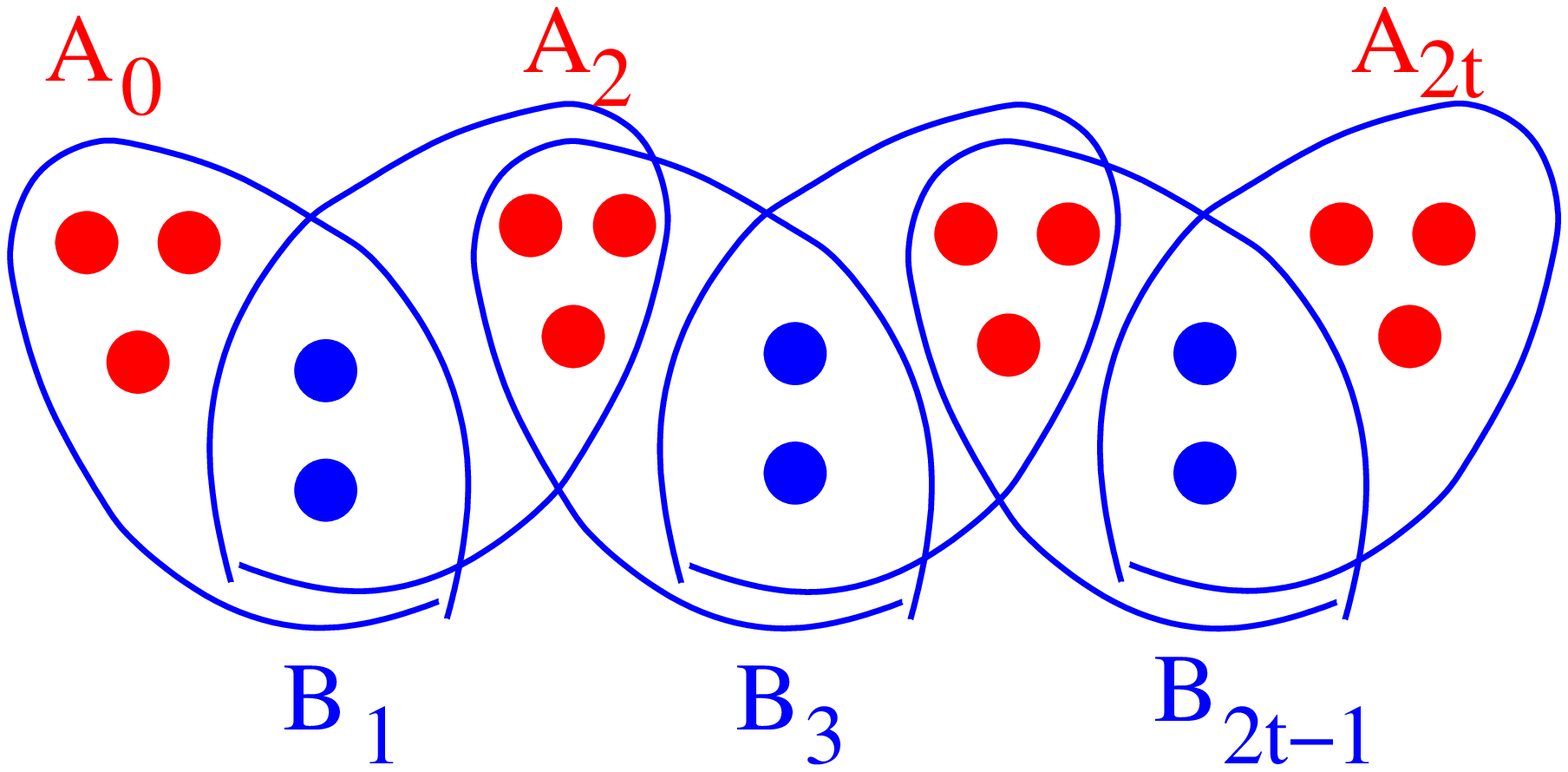} \quad\quad \includegraphics[scale=0.3]{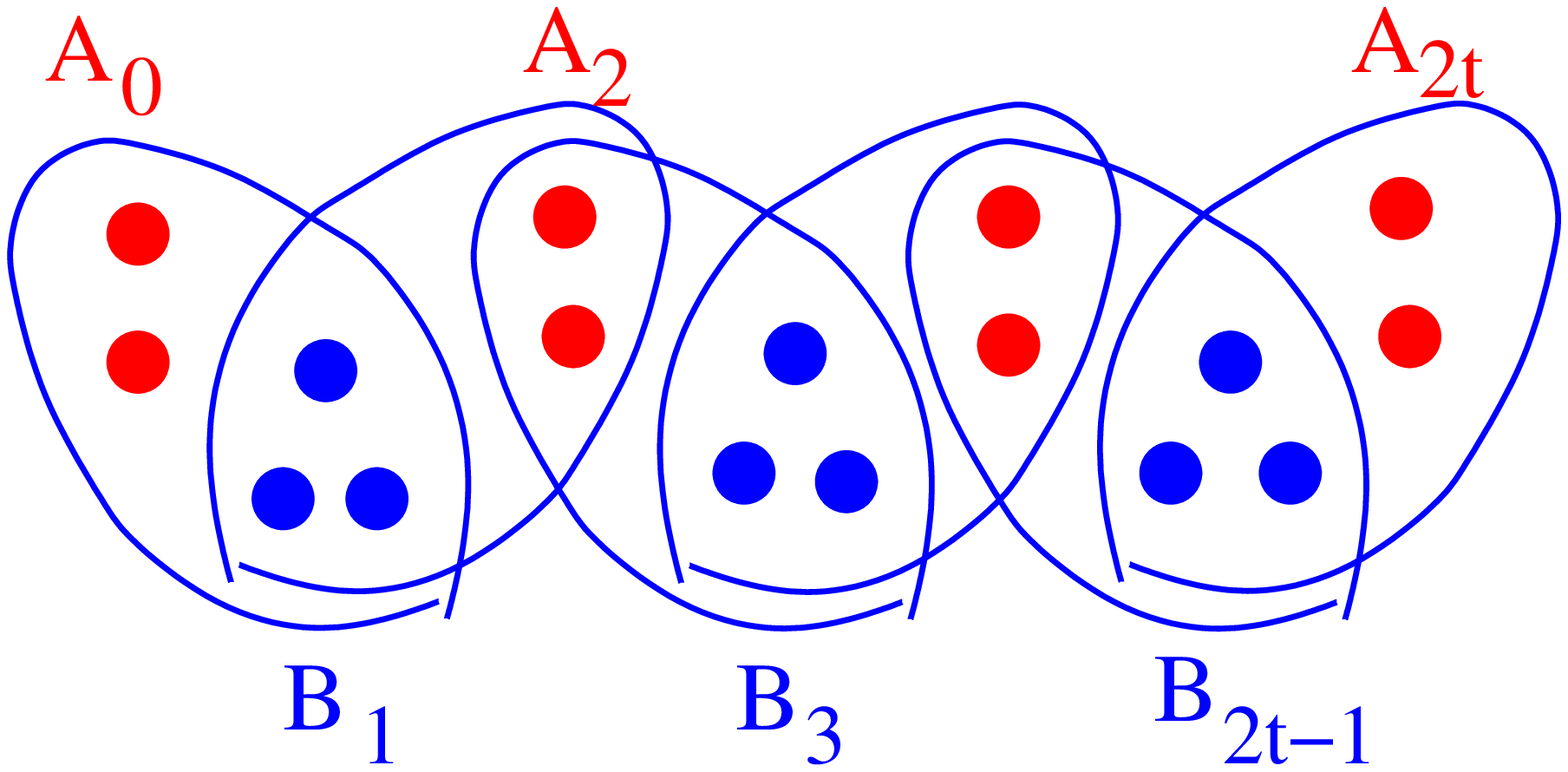} 
\end{center}

 ${}$ \vskip -0.8cm

{\bf On $(a,b)$-paths of length $3$.} \\
In the case $\ell=3$ an $(a,b)$-path has three $r$-sets, two of them are disjoint and they cover the third in a prescribed way.
For given $1\leq a,b <r$, $r=a+b$ and for $n> n_2(r)$,  F\"uredi and \"Ozkahya~\cite{FurOzk} showed that
\begin{equation*} 
   \ex_r(n, P_3(a,b)) = \binom{n-1}{r-1}.
   \end{equation*}

\medskip
{\bf Longer paths.} \\
Our first goal is to prove a nontrivial extension of the
Erd\H{o}s--Gallai Theorem~\eqref{eq:2} for $r$-graphs.

There are several ways to define a hypergraph path $P$.
One of the most difficult cases appears to be the case when $P$ is a {\em tight path} of length
$\ell$, namely the $r$-graph $Tight\, P_\ell^r$ with edges
$\{1,2,\dots,r\},\{2,3,\dots,r+1\},\dots,\{\ell,\ell+1,\dots,\ell+r-1\}$.
 The best known results~\cite{FJKMV_Zigzag} for this special case are 
	\[
\frac{\ell-1}{r}{n \choose r-1} \leq \ex_r(n,Tight\, P_\ell^r)  \leq \left\{\begin{array}{ll}
	\frac{\ell-1}{2}{n \choose r - 1} & \mbox{ if }r\mbox{ is even,} \\
	\frac{1}{2}(\ell + \lfloor \frac{\ell-1}{r}\rfloor){n \choose r - 1} & \mbox{ if }r\mbox{ is odd,}
	\end{array}\right.\]
where the lower bound holds as long as certain designs exist.

Another possibility is the
$r$-uniform {\em loose path} (also called {\em linear path})  $Lin\, P_\ell^r$, which is obtained from
$P^2_\ell$ by enlarging each edge with a new set of $(r-2)$
vertices such that these new $(r-2)$-sets are pairwise
disjoint (so $|V(P^r_\ell)|= \ell(r-1)+1$). Recently, the authors~\cite{FJS, KMV}
determined $\ex_r(n, Lin\, P_\ell^r)$ exactly for large $n$,
extending a work of Frankl~\cite{Frankl1977} who solved the
case $\ell=2$ by answering a question of Erd\H os and S\'os~\cite{S}
(see~\cite{KMW} for a solution for all $n$ when $r=4$).

Here we consider the $(a,b)$-blowup of $P_\ell$.
Since the case $\ell=2$ behaves somewhat differently, see~\eqref{eq:3} and~\eqref{eq:4},
 we only discuss the case $\ell\geq 3$.

Suppose that  $a+b=r$, $a,b\geq 1$, $r\geq 3$ and suppose that $\ell\in \{ 2k-1, 2k\}$, $\ell\geq 4$.
Furthermore, suppose that these values are fixed and $n\to \infty$ or $n> n_3(r,k)$.
Recall that $ \Psi_{t-1}(n,r):=\{ E\subset [n] : |E|=r,  E  \cap [k-1] \not= \emptyset\}$.
We have the lower bound 
\begin{eqnarray*}
  \ex_r(n,P_{2k}(a,b)) &\geq & \ex_r(n,P_{2k-1}(a,b))  \\  &\geq& |\Psi_{k-1}(n,r)|
=\binom{n}{r}-\binom{n-k+1}{r}  =(k - 1){n \choose r - 1}+ o(n^{r- 1})  .
        \end{eqnarray*}

Our main result (Theorem~\ref{mainexact}) implies that here equality holds
  for at least 75\% of the cases.
\begin{theorem} \label{th:path}
Suppose that  $a+b=r$, $a,b\geq 1$, $k\geq 2$.
Concerning the Turan number of $P_{\ell}(a,b)$,  the $(a,b)$ blowup of a path of length $\ell$, we have
\begin{gather*}
\ex_r(n,P_{2k-1}(a,b)) = { (k - 1){n \choose r - 1}+ o(n^{r - 1}) }\quad {\rm for \, \,  all \,\, odd }\,\,\, r,
    \\
{\ex_r(n,P_{2k}(a,b)) } = (k - 1){n \choose r - 1}+ o(n^{r - 1}) \quad {\rm for }\,\,\,  a>b.
   \end{gather*}
Moreover, if $a\neq b$, $a,b\geq 2$, $\ell=2k-1$, then  $\Psi_{k-1}(n,r)$ is the only extremal family.
\end{theorem}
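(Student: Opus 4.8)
The plan is to dispatch the lower bound at once and then put all the effort into the matching upper bound. For the lower bound, observe that the $k$ odd-indexed edges $A_0\cup B_1,\,A_2\cup B_3,\dots,A_{2k-2}\cup B_{2k-1}$ of $P_{2k-1}(a,b)$ are pairwise disjoint, so $\tau(P_{2k-1}(a,b))\ge k$, while taking one point from each of $B_1,B_3,\dots,B_{2k-1}$ gives simultaneously a cover and a crosscut of size $k$; hence $\tau=\sigma=k$ and \eqref{eq3} already yields $\ex_r(n,P_{2k-1}(a,b))\ge|\Psi_{k-1}(n,r)|=(k-1)\binom{n}{r-1}+o(n^{r-1})$. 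The entire content is therefore the reverse inequality, for which I would prove the structural statement that \emph{if $H$ is $P_{2k-1}(a,b)$-free then all but $o(n^{r-1})$ of its edges meet one fixed $(k-1)$-set}; combined with \eqref{eq:psi} this gives the theorem. The heuristic for why $k-1$ is the right number is exactly the reason $\Psi_{k-1}(n,r)$ is extremal: in any embedded copy of the path the blobs are pairwise disjoint, so a single cover vertex lies in at most one blob and therefore meets at most $2$ of the $2k-1$ edges, whence a $(k-1)$-set meets at most $2k-2<2k-1$ edges and cannot host the whole path.

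For the upper bound I would induct on $k$, taking as base case $k=2$ the theorem of F\"uredi and \"Ozkahya~\cite{FurOzk}, $\ex_r(n,P_3(a,b))=\binom{n-1}{r-1}$. It is convenient to pass to the auxiliary bipartite \emph{link} graph $\mathcal A$ whose two sides are the $a$-sets and the $b$-sets of $V(H)$, with $A\sim B$ whenever $A\cup B\in H$; then a copy of $P_{2k-1}(a,b)$ in $H$ is precisely an alternating path of $2k-1$ edges in $\mathcal A$ whose blobs are \emph{pairwise} disjoint. The crucial point, and the reason a naive count fails, is that this pairwise disjointness is not generic: as $\Psi_{k-1}$ shows, edges may pile up around a few vertices so that long alternating walks exist yet none can be realised with disjoint blobs. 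To control this I would run Katona's permutation method asymmetrically: fix a uniformly random linear order of $V(H)$ and, using $a\ne b$, read each edge in the unique way that distinguishes its $a$-block from its $b$-block, so that every edge receives a canonical orientation. One then builds greedily, and increasingly in the order, a longest \emph{disjoint} alternating path; scanning only to the right makes disjointness with respect to the fresh (later) vertices automatic, which is precisely the device that replaces the troublesome global disjointness by a one-sided, manageable condition.

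The heart of the matter is the non-extendability analysis at the two ends of such a longest path. If it has fewer than $2k-1$ edges, then at each end every edge that would prolong it must reuse a vertex already on the path; since the path spans only $O(kr)=O(1)$ vertices, this pins the two pendant blobs inside a bounded set and forces their links in $\mathcal A$ to be small, of size $O(n^{b-1})$ and $O(n^{a-1})$ respectively. A rotation/absorption argument on blobs, the hypergraph analogue of P\'osa rotations carried out along the random order, should then propagate this restriction: either the high-degree blobs all funnel through a common $(k-1)$-set, or one can splice in a rotation to lengthen the path, contradicting maximality. I expect this blob-rotation step to be the main obstacle, because a rotation must preserve both the alternation of block sizes \emph{and} disjointness, and it is here that $a>b$ (equivalently, for odd $r$, the automatic $a\ne b$) is essential: the larger pendant $a$-blocks give the room needed to reattach, whereas in the symmetric case $a=b$ the canonical orientation collapses and the argument genuinely breaks, which is why those cases stay open. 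Carrying the induction through, the surviving high-degree structure is covered by $k-1$ vertices and the rest number $o(n^{r-1})$, giving the bound for odd $r$, and the same scheme applied to $P_{2k}(a,b)$, whose two pendants are $a$-blocks, gives it for $a>b$.

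For the final uniqueness assertion ($a\ne b$, $a,b\ge 2$, $\ell=2k-1$) I would upgrade the count to a stability statement: if $|H|=(k-1)\binom{n}{r-1}-o(n^{r-1})$, then the covering $(k-1)$-set $X$ produced above must in fact meet \emph{every} edge, since any uncovered edge combined with the density around $X$ would close up a path. A short exchange argument using $a,b\ge 2$ then rules out the crosscut-type family $\Psi^1_{k-1}(n,r)$ of \eqref{eq:psi1} and every other near-extremal configuration, leaving $\Psi_{k-1}(n,r)$ as the only possibility.
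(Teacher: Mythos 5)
Your lower bound is correct and is the same as the paper's: the $k$ pairwise disjoint edges force $\tau(P_{2k-1}(a,b))\ge k$, one vertex from each $B_j$ is simultaneously a cover and a crosscut of size $k$, and \eqref{eq3} finishes. The gap is in the upper bound, where you have written a research plan rather than a proof. The italicized structural claim (all but $o(n^{r-1})$ edges meet a fixed $(k-1)$-set) \emph{is} the theorem, and the step that is supposed to deliver it --- ``either the high-degree blobs all funnel through a common $(k-1)$-set, or one can splice in a rotation to lengthen the path'' --- is never argued; you yourself flag the blob-rotation step as the main obstacle. This is not a fillable routine detail. Maximality of one alternating path with pairwise disjoint blobs only constrains the two pendant blobs of that particular path (their links have size $O(n^{b-1})$ and $O(n^{a-1})$); it says nothing about the $\Theta(n^a)$ other potential blobs, and no mechanism is given for converting this local fact into a global $(k-1)$-cover. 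P\'osa rotation in graphs works because a rotation keeps the vertex set of the path fixed and re-attaches one endpoint; here a rotation would have to preserve pairwise disjointness of all blobs \emph{and} the $a/b$-alternation simultaneously, and you supply no construction, no invariant, and no termination argument. The announced induction on $k$ is never actually used (the base case from~\cite{FurOzk} is quoted, but the inductive hypothesis never enters the extension step), and the ``canonical orientation'' from a random linear order is not a functioning device: an edge of $H$ is just an $r$-set, and declaring its first $a$ points to be its $a$-block does nothing to make consecutive path edges share a blob, which is what the alternating structure requires.

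For comparison, the paper never argues on paths directly: it deduces Theorem~\ref{th:path} from the general tree results, applying Theorem~\ref{th:main} to $P_{2k-1}$ (a tree with parts $s=t=k$; for odd $r$ one has $a\ne b$, and WLOG $a>b$ since $P_{2k-1}(a,b)=P_{2k-1}(b,a)$) and to $P_{2k}$ (parts $s=k+1$, $t=k$), and Theorem~\ref{mainexact} for the uniqueness assertion. The engine is the template method: a random vertex set $R$ of density $\alpha$ supplies the $a$-sets $A=\binom{R}{a}$, an independent random partition of $V(H)$ into $b$-sets supplies $B$, and Lemma~\ref{eglift} --- whose proof is exactly where $a>b$ is used, in the count $d_{H_0}(e)\le as\delta n^{a-1}$ --- bounds the number of edges split by the template; comparing the expectation bounds \eqref{eq1} and \eqref{upper2} gives the asymptotics. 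Your uniqueness step has the same defect as your upper bound: a ``short exchange argument'' is claimed where the paper needs a full stability theorem (Theorem~\ref{th8}, built on Corollary~\ref{mainprime} and Kruskal--Katona) followed by the exact analysis of Section 5 (Claim 3, potential trees, missing edges, Kruskal--Katona again). Finally, a factual slip: the case $a=b$ for odd-length paths is not open --- it is settled by Frankl's random-partition bound \eqref{eq:r/2} in Section~\ref{se:7}; the genuinely open cases are even length with $a\le b$.
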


The remaining cases ($\ell$ is even and $a\leq b$) are still open.

\begin{conjecture} 
$\Psi_{k-1}(n,r)$ gives the correct asymptotic of the Tur\'an number in all the above cases.
   \end{conjecture}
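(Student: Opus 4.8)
The lower bound is already given by $\Psi_{k-1}(n,r)$, so the whole task is the matching upper bound, and once it is obtained asymptotically a stability analysis of the near-equality case will give the uniqueness assertion. Thus I want to show that every $P_\ell(a,b)$-free $H\subseteq\binom{[n]}{r}$ satisfies $|H|\le (k-1)\binom{n}{r-1}+o(n^{r-1})$, and I would do this by establishing the dichotomy: \emph{either} $H$ contains $P_\ell(a,b)$, \emph{or} there is a set $C$ with $|C|\le k-1$ meeting all but $o(n^{r-1})$ edges. The second alternative immediately yields $|H|\le|C|\binom{n-1}{r-1}+o(n^{r-1})\le(k-1)\binom{n}{r-1}+o(n^{r-1})$. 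As a preliminary ``well-chosen substructure'' step I would regularize: the average degree of an $a$-subset of the edges is of order $n^{b-1}$ and that of a $b$-subset of order $n^{a-1}$, so for a small constant $\delta$ I repeatedly delete any edge containing an $a$-subset of degree below $\delta n^{b-1}$ or a $b$-subset of degree below $\delta n^{a-1}$. Charging each deleted edge to the offending subset shows only $O(\delta n^{r-1})$ edges are lost; hence if $|H|\ge(k-1+\epsilon)\binom{n}{r-1}$ the surviving family $H^\ast$ is nonempty and every $a$- and $b$-block of every edge has large degree.

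In $H^\ast$ I would try to build $P_\ell(a,b)$ greedily, alternately attaching $a$-blocks and $b$-blocks and, at each step, choosing a completing block disjoint from the $O(1)$ vertices used so far. This is exactly where the asymmetry and the permutation device are needed. The obstruction is that a block of degree only $\delta n^{a-1}$ has no more completions than there are $a$-sets through a single fixed vertex, so the greedy extension can fail if \emph{all} completions funnel through a bounded set of heavy vertices, and this funneling is precisely the structure of $\Psi_{k-1}$, where every block's completions pass through the $(k-1)$-element core. The point of using $a\neq b$ is to always extend in the more abundant direction: for $\ell=2k-1$ the path is symmetric, so either orientation of the alternation works, whereas for $\ell=2k$ the extra pendant block is an $a$-block and one genuinely needs $a>b$ so that the $a$-completions are the plentiful ones; this is exactly why the even case with $a\le b$ remains open. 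Concretely I would use an \emph{asymmetric} random ordering of $[n]$ (treating the $a$-part and the $b$-part of an edge differently, which is legitimate because $a\neq b$) to count, for each candidate block, the completions avoiding the currently used vertices.

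The heart of the proof is therefore the analysis of the heavy vertices. I would let $C$ be the set of vertices lying in $\Omega(n^{r-1})$ edges and argue the dichotomy directly: if $C$ contains $k$ vertices in ``independent'' heavy directions, then placing one in each of $k$ suitably chosen blocks lets each heavy vertex account for its (at most two) incident path-edges while the spread guaranteed in $H^\ast$ completes the remaining blocks off the used vertices, producing $P_\ell(a,b)$; if instead no such $k$ directions exist, then $|C|\le k-1$ and the edges escaping $C$ are governed by a $P_{\ell-2}(a,b)$-free remainder, to which I would apply induction on $k$ to conclude that they number $o(n^{r-1})$. The main obstacle is making this last interaction rigorous: ruling out that $k-1$ heavy vertices could, together with the spread low-degree part, still block every attempt to realize a $k$-th independent block. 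This is precisely the step where the symmetric permutation count fails, since it only sees the total number of interval-edges and is defeated by a single residue class, which carries linearly many intervals yet no path, and it must be replaced by the asymmetric count; I expect the bulk of the work, and the genuine novelty, to lie here. Tracking the $\epsilon$ through this argument and examining the near-equality case then pins down $\Psi_{k-1}(n,r)$ as the unique extremal family when $a\neq b$ and $a,b\ge2$.
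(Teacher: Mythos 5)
What you are asked to prove here is not a theorem of the paper but the conjecture the paper leaves open, and your proposal, by its own admission, does not prove it. After Theorem~\ref{th:path} the authors state that the only unresolved cases are $\ell=2k$ with $a\le b$; every other case of the conjecture is already covered by Theorems~\ref{th:main} and~\ref{mainexact} together with inequality~\eqref{eq:r/2}. Your sketch explicitly concedes exactly these unresolved cases: you write that for $\ell=2k$ one ``genuinely needs $a>b$'' and that ``this is exactly why the even case with $a\le b$ remains open.'' A proof of the conjecture must do the opposite: it must produce the upper bound $(k-1)\binom{n}{r-1}+o(n^{r-1})$ for $P_{2k}(a,b)$-free families when $a<b$, and improve the bound $\left(k-\frac{1}{2}\right)\binom{n}{r-1}$ coming from~\eqref{eq:r/2} when $a=b$. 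Since your extension scheme is designed to extend in the ``more abundant'' $a$-direction, it collapses precisely where the conjecture begins. This is the disqualifying gap: the proposal establishes nothing beyond what the paper already proves.

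Even for the cases you do address, the sketch is not a proof, and it is not the paper's argument. Your central ``dichotomy'' (either $H$ contains the path, or some $(k-1)$-set meets all but $o(n^{r-1})$ edges) is a restatement of the theorem plus its stability version, not a step toward it, and you supply no mechanism for it; the paper's mechanism for the settled cases is the template Lemma~\ref{eglift}, applied to a random vertex subset $R$ and an independent random partition of $V(H)$ into $b$-sets, with a comparison of the upper and lower estimates of $E(|H_1|)$ --- heavy vertices play no role there. Two of your key steps also fail as stated. First, your set $C$ consists of vertices of degree $\Omega(n^{r-1})$, but extending a partial $(a,b)$-path requires an edge containing a specified $a$-block or $b$-block and otherwise disjoint from the partial path, not merely an edge through a specified vertex; large vertex degree provides no such edges, so ``placing one heavy vertex in each of $k$ blocks'' does not yield $P_\ell(a,b)$. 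Second, the induction step asserting that the edges avoiding $C$ form a $P_{\ell-2}(a,b)$-free family is unjustified: a copy of $P_{\ell-2}(a,b)$ avoiding $C$ obstructs nothing unless some edge through $C$ contains one of its pendant blocks, and nothing you prove guarantees this. Finally, the ``asymmetric random ordering'' that is supposed to carry the heart of the argument is never defined and never used to count anything; the cleaning step at the start (deleting edges containing low-degree $a$- or $b$-sets, losing $O(\delta n^{r-1})$ edges) is the only sound step in the sketch.
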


\section{Trees blown up, our main results}

Generalizing the Erd\H{o}s--Gallai Theorem~\eqref{eq:2},
 Ajtai, Koml\'os, Simonovits and Szemer\'{e}di~\cite{AKSS} claimed a proof of the
Erd\H{o}s--S\'{o}s Conjecture~\cite{ErdosSos}, showing
that if $T$ is any tree with $\ell$ edges, where $\ell$ is
large enough, then for all $n$,
\[ \ex_2(n,T) \leq \frac{1}{2}(\ell - 1)n.\]
 A more general conjecture
due to Kalai (see in~\cite{FF})  is about the extremal number for hypergraph
trees. A hypergraph $T$ is a {\em forest} if it consists of edges
$e_1,e_2,\dots,e_\ell$  ordered so that  for every $1<i\leq \ell$, there is $1\leq i'<i$ such that
$e_i \cap
(\bigcup_{j < i} e_j) \subseteq e_{i'}$.
A connected forest is called a {\em tree}.
If $T$ is $r$-uniform and for each $i>1$, $|e_i\cap (\bigcup_{j<i} e_j)|=r-1$,
then we say that $T$ is a {\em tight tree}.

\begin{conjecture} {\bf (Kalai)}
Let $T$ be an $r$-uniform tight tree with $\ell$ edges.
Then
\begin{equation*} 
  \ex_r(n,T) \leq \frac{\ell - 1}{r}{n \choose r - 1}.
 \end{equation*}
\end{conjecture}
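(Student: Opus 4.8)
The plan is to prove the equivalent embedding statement: every $r$-graph $H$ on $[n]$ with more than $\frac{\ell-1}{r}\binom{n}{r-1}$ edges contains the tight tree $T$. The natural starting point is the shadow identity. Since each edge of $H$ contains exactly $r$ sets of size $r-1$, counting incident pairs gives
\[ r\,|H| \;=\; \sum_{S\in\partial H} d_H(S), \]
where $d_H(S)$ is the number of edges of $H$ containing the $(r-1)$-set $S$. Consequently, if $|H|>\frac{\ell-1}{r}\binom{n}{r-1}$ then $\sum_{S\in\partial H} d_H(S) > (\ell-1)\binom{n}{r-1}\ge(\ell-1)\,|\partial H|$, so the \emph{average} positive codegree over the shadow exceeds $\ell-1$. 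The whole problem is to upgrade this averaged information into an actual copy of $T$.

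The embedding engine would be a greedy procedure along the defining order $e_1,\dots,e_\ell$ of a tight tree. Send $e_1$ to any edge of $H$; for $i>1$ the intersection $e_i\cap\bigcup_{j<i}e_j$ is an $(r-1)$-set already placed onto some $S\subset V(H)$, and it suffices to find a still-unused vertex $v$ with $S\cup\{v\}\in H$. Because a tight tree with $\ell$ edges spans only $r+\ell-1$ vertices, at each step at most $r+\ell-2$ vertices are occupied, so every extension succeeds as soon as the relevant codegree satisfies $d_H(S)\ge r+\ell-1$. Thus a clean sufficient condition is the existence of a subhypergraph $H'\subseteq H$ all of whose shadow sets have positive codegree at least $r+\ell-1$; any such $H'$ already contains $T$. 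One extracts $H'$ by a standard cleaning step: repeatedly delete an edge through a shadow set of codegree below $r+\ell-1$. Charging each deleted edge to such a triggering set shows the cleaning removes at most $(r+\ell-2)\binom{n}{r-1}$ edges in total, so $|H|>(r+\ell-2)\binom{n}{r-1}$ already forces $T\subseteq H$.

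The hard part, and the reason the statement is only a conjecture, is the constant $\frac1r$. The cleaning argument of the previous paragraph yields a bound of the form $(r+\ell-2)\binom{n}{r-1}$, which exceeds the conjectured $\frac{\ell-1}{r}\binom{n}{r-1}$ by a factor of order $r$. This loss is not an artifact of the method: the conjectured extremal configurations are design-like (partial Steiner systems, as in the tight-path lower bound quoted earlier) rather than codegree-regular, so a handful of $(r-1)$-sets carry large codegree while almost all carry codegree far below $\ell$, and any argument that only sees the average, or that greedily insists on uniformly large codegree, cannot detect this. This is exactly why even the special case of the tight path $Tight\,P^r_\ell$ is known only up to the constant-factor bounds stated above.

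To chase the sharp constant I would instead try induction on $\ell$, modeled on the Erd\H{o}s--Gallai/Kopylov analysis for graphs. Delete a leaf edge $e_\ell$ and its unique private vertex to obtain a tight tree $T'$ with $\ell-1$ edges; assuming the bound for $T'$, one would try to show that in a hypothetical $H$ exceeding the threshold the surplus edges can be injectively charged to copies of $T'$ whose exposed $(r-1)$-set admits no valid extension, the hypergraph analogue of saturating a longest path. The main obstacle is bounding how many edges a single copy of $T'$ can block without completing $T$: for $r=2$ a longest-path and $2$-connected-component decomposition controls this precisely, but for $r\ge3$ the overlap structure of tight subtrees is vastly richer and no comparably rigid substitute, nor an exact shifting or compression that preserves tightness, is known. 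Overcoming this is, in effect, the entire content of the conjecture, which is why I expect it, and not any of the routine double-counting or greedy steps, to be where the real work lies.
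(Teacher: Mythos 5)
The statement you were asked to prove is presented in the paper as Kalai's \emph{conjecture}: the paper gives no proof of it and it remains open, so your refusal to claim a full proof is the correct call. What you do prove is, in substance, exactly what the paper proves in its place: Proposition~\ref{general-bound}, which says that any $T$-free $r$-graph $G$ on $[n]$ satisfies $|G|\leq(\ell-1)|\partial(G)|\leq(\ell-1)\binom{n}{r-1}$, established by the same greedy-embedding-plus-cleaning argument you outline. One quantitative remark: your cleaning threshold $r+\ell-1$ is needlessly generous. When you extend the $(r-1)$-set $S$ at step $i$ of the greedy embedding, all $r-1$ vertices of $S$ are already among the $r+i-2$ used vertices, so at most $i-1\leq\ell-1$ of the edges through $S$ are blocked; hence positive codegree at least $\ell$ suffices at every step. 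Cleaning with threshold $\ell$ removes at most $(\ell-1)|\partial H|$ edges, which recovers the paper's constant $(\ell-1)\binom{n}{r-1}$ in place of your $(r+\ell-2)\binom{n}{r-1}$. Both bounds, of course, miss the conjectured $\frac{\ell-1}{r}\binom{n}{r-1}$ by a factor of order $r$, and your diagnosis of why --- averaged or uniformly-large codegree information cannot detect the design-like structure of the conjectured extremal configurations --- is consistent with the state of the art the paper quotes for tight paths, where only constant-factor bounds are known. The inductive program in your final paragraph is likewise not carried out in the paper or anywhere else; the difficulty of the conjecture lies exactly where you locate it.
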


When $r = 2$, this is precisely the Erd\H{o}s--S\'{o}s Conjecture.
A simple greedy argument shows that
\begin{proposition}\label{general-bound}
If $T$ is an $r$-uniform tight tree with $\ell$ edges and $G$ is an $r$-graph
on $[n]$ not containing $T$, then $|G|\leq (\ell-1)|\partial(G)|$.
\end{proposition}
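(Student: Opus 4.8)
The plan is to reduce the statement to a greedy embedding lemma followed by a degeneracy/charging argument, mirroring the standard proof of the analogous bound for graphs ($r=2$). The dictionary is that the role of a vertex is played by an $(r-1)$-set $S\in\partial(G)$, and the role of a vertex degree is played by its codegree $d_G(S)=|\Gamma_G(S)|$, i.e.\ the number of edges of $G$ containing $S$.

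First I would prove the key lemma: if $G$ is an $r$-graph in which every $(r-1)$-set $S\in\partial(G)$ satisfies $d_G(S)\ge \ell$, then $G$ contains every $r$-uniform tight tree $T$ with $\ell$ edges. To see this, fix a tight-tree ordering $e_1,\dots,e_\ell$ of $T$ and embed $T$ edge by edge. Map $e_1$ to an arbitrary edge of $G$. For $i\ge 2$, tightness guarantees that $e_i$ meets $\bigcup_{j<i}e_j$ in an $(r-1)$-set $S_i$ lying inside a single earlier edge $e_{i'}$, so $S_i$ has already been embedded onto some $(r-1)$-set $S_i'$ contained in the image of $e_{i'}$; in particular $S_i'\in\partial(G)$. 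It then remains to extend $S_i'$ by a single new vertex $v\in\Gamma_G(S_i')$ avoiding all previously used vertices. The crucial bookkeeping is that after placing $i-1$ edges we have used at most $r+(i-2)$ vertices, of which $r-1$ already lie in $S_i'$; hence $v$ must avoid only the remaining $i-1$ used vertices, and since $d_G(S_i')\ge \ell\ge i$ such a $v$ exists. This step, and in particular the observation that the shared $(r-1)$-set absorbs $r-1$ of the forbidden vertices so that the codegree threshold is exactly $\ell$ rather than something like $r+\ell-1$, is the main point one must get right; everything else is routine.

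With the lemma in hand, the remainder is a peeling argument. Since being $T$-free is hereditary, every nonempty subgraph $G'\subseteq G$ is $T$-free, so by the contrapositive of the lemma it contains an $(r-1)$-set $S\in\partial(G')$ with $d_{G'}(S)\le \ell-1$. I would then repeatedly select such an $S$ in the current graph, delete all (at most $\ell-1$) edges through $S$, and charge them to $S$. Deleting these edges removes $S$ from the shadow, so each $(r-1)$-set is charged in at most one round, and every charged set belongs to the original shadow $\partial(G)$. Running this to exhaustion removes all edges of $G$ in at most $|\partial(G)|$ rounds, each discarding at most $\ell-1$ edges, which yields $|G|\le(\ell-1)\,|\partial(G)|$, as required.
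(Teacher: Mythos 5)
Your proof is correct and is exactly the ``simple greedy argument'' the paper alludes to without spelling out: a minimum-codegree greedy embedding of the tight tree (with the key count that the shared $(r-1)$-set absorbs $r-1$ of the used vertices, so codegree $\ell$ suffices), combined with peeling low-codegree shadow sets and charging at most $\ell-1$ edges to each. No gaps; this matches the paper's intended argument.
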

Here $\partial(G)$ is the
family of $(r-1)$-sets that lie in some edge of $G$. We obtain 
\begin{equation*} 
  \ex_r(n,T) \leq (\ell - 1){n \choose r - 1}.
 \end{equation*}

Our goal is to prove a nontrivial extension of the Erd\H{o}s--Gallai Theorem and the Erd\H{o}s--S\'{o}s Conjecture
for $r$-graphs. 
To define the hypergraph trees we study in this paper, we make the following more general definition:

\begin{definition}
Let $s,t, a,b > 0$ be integers, 
 $r=a + b$, and let $H = H(U,V)$ denote a bipartite graph with parts $U = \{u_1,u_2,\dots,u_s\}$
and $V = \{v_1,v_2,\dots,v_t\}$.
Let $U_1, \dots, ,U_s$ and $V_1, \dots, V_t$ be pairwise disjoint sets, such that $|U_i|=a$  and $|V_j|=b$ for all $i,j$.
So $\left| \bigcup U_i\cup V_j \right| = as+bt$.

The {\em  $(a,b)$-blowup} of $H$, denoted by $H(a,b)$, is the
$r$-uniform hypergraph  with edge set
\[ H(a,b):= \{U_i \cup V_j :
u_iv_j \in E(H)\}\] 
\end{definition}

\vskip -0.2cm
    \includegraphics[height=0.9in]{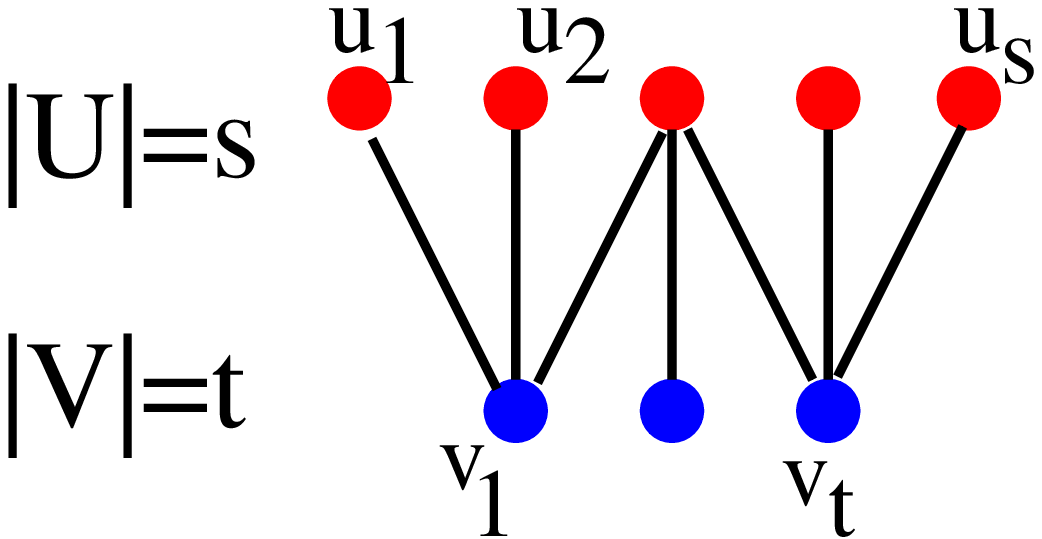} $\quad \Longrightarrow\quad $
    \includegraphics[height=1.2in]{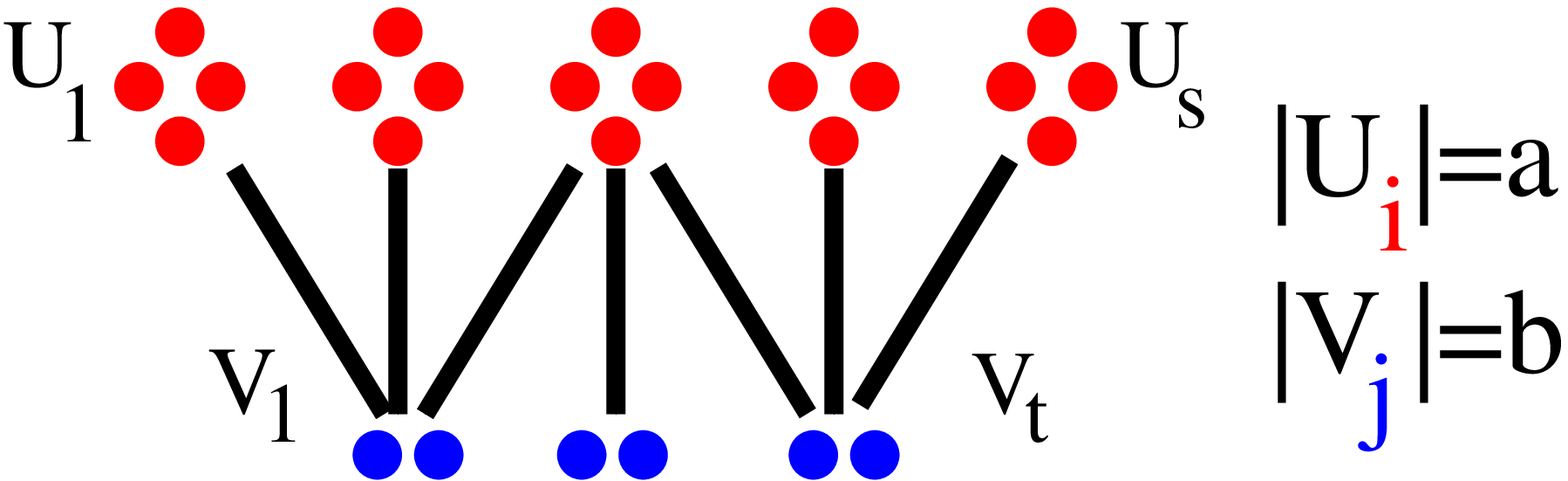}

  Since an $(a,b)$-blowup of a bipartite graph $H$ 
 $\sigma(H)$ is well defined.
Since deleting a vertex cover from a bipartite graph leaves an independent set,
each cross cut in a connected bipartite graph is one of its parts, 
 $\sigma(\mathcal{H})=\min\{s,t\}$.
Then the crosscut construction~\eqref{eq:psi1},  $ \Psi^1_{\sigma-1}(n,r):=\{ E\subset [n] : |E|=r,  |E  \cap [\sigma-1] |= 1\}$, yields that
\begin{equation}\label{eq:cclower}  (\sigma -1){n \choose r - 1} + o(n^{r - 1})
     = (\sigma-1) \binom{n-\sigma +1}{r-1} =  |\Psi^1_{\sigma -1}(n,r)|\leq  \ex_r(n,H).
 \end{equation}

 Let $\mathcal{T}_{s,t}$ denote the family of
  trees $T$ with parts $U$ and $V$ where $|U|= s$ and $|V| = t$.
We frequently say that $T$ is a tree with $s+t$ vertices.
Let $\mathcal{T}_{s,t}(a,b)$ denote the family of  $(a,b)$-blowups of
  trees $T\in \mathcal{T}_{s,t}$.
We frequently suppose that $a\geq b$ (but not always).

We investigate the problem of determining when crosscut
constructions are asymptotically extremal for
$(a,b)$-blowups of trees.
For other instances of hypergraph trees for which the crosscut constructions are asymptotically extremal,
 see~\cite{KMV2}.
Our main result is the following theorem.

\begin{theorem}\label{th:main}
Suppose  $r \geq 3$, $s,t \geq 2$, $a + b = r$, $b < a < r$.
\Tab
Then (as $n\to \infty$) any
$\mathcal{T}$-free $n$-vertex $r$-graph $H$ satisfies
\[ |H| \leq (t - 1){n \choose r-1} + o(n^{r-1}).\]
This is asymptotically sharp whenever $t\leq s$.
\end{theorem}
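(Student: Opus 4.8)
The plan is to prove the upper bound by contradiction, producing a copy of $\mathcal{T}=T(a,b)$ inside any $\mathcal{T}$-free $H$ with $|H|\ge (t-1)\binom{n}{r-1}+\varepsilon n^{r-1}$, and to organize the argument as an induction on the number $t$ of $b$-blobs, with an inner induction on $s$. The reason the bound is governed by $t$ rather than $s$ is precisely the hypothesis $b<a$: the $b$-sets are the ``small'' blobs, and it is on the $V$-side that I will both count and peel. The base case $t=1$ forces $T=K_{1,s}$ with centre in $V$, so $\mathcal{T}$ is a ``$b$-sunflower'' of $s$ petals of size $a$; since avoiding even two such petals already caps the size at $\Theta(n^{\max\{a-1,b\}})=\Theta(n^{a-1})=o(n^{r-1})$ by~\eqref{eq:4}, any $H$ with $|H|>\varepsilon n^{r-1}$ contains the sunflower once $n$ is large. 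This is the $(t-1)=0$ instance of the claim.

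Before the inductive step I would \emph{clean} $H$, iteratively deleting every edge that contains an $a$-subset of degree below $n^{b-1}/\omega(n)$ or a $b$-subset of degree below $n^{a-1}/\omega(n)$, where $\omega(n)\to\infty$ slowly. Since there are only $O(n^{a})$ candidate $a$-sets and $O(n^{b})$ candidate $b$-sets, this removes at most $O(n^{a}\cdot n^{b-1}/\omega)+O(n^{b}\cdot n^{a-1}/\omega)=o(n^{r-1})$ edges, so the cleaned $H'$ still exceeds $(t-1)\binom{n}{r-1}$. This cleaning is deliberately weak: the extremal family $\Psi^1_{t-1}$ survives it essentially intact, because there every surviving small set has degree of the critical order $\Theta(n^{a-1})$ or $\Theta(n^{b-1})$ with all of its partners funnelled through the $(t-1)$-element crosscut. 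Defeating this funnelling is the real content of the argument, and the reason the weak cleaning is not by itself enough to run a greedy embedding.

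For the inductive step I peel a leaf $\ell$ of $T$. If some leaf lies in $V$, say $\ell=v$ with neighbour $u\in U$, I set $T^-=T-v$, a tree with $t-1$ blobs in $V$; if every leaf lies in $U$, I peel a $U$-leaf and recurse on the inner parameter $s$ with the target bound $(t-1)$ unchanged, which eventually exposes a $V$-leaf. To realise $T$ I would locate a copy of $T^-(a,b)$ by the inductive hypothesis and attach the missing $b$-blob $V_\ell$ to the image $a$-set $A_u$ of $u$ through a fresh edge $A_u\cup V_\ell\in H'$. Each greedy attachment of this kind succeeds only when the current image set is \emph{heavy} — $d_{H'}(A_u)\gg n^{b-1}$ for an $a$-set, $d_{H'}(B)\gg n^{a-1}$ for a $b$-set — since the number of blobs meeting the $O(1)$ already-used vertices is only $O(n^{b-1})$ or $O(n^{a-1})$. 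The weak cleaning guarantees degree merely $\gtrsim n^{a-1}/\omega$, which is one $\omega$-factor short; so the induction must be strengthened to deliver a copy of $T^-(a,b)$ whose attachment site at $u$ is heavy, and more generally all of whose attachment sites are heavy, so that the greedy runs to completion.

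Maintaining this heaviness invariant throughout the induction is the step I expect to be the main obstacle, and it is where the paper's \emph{asymmetric} version of Katona's permutation method enters. Rather than fixing the embedding of $T^-$, I would produce it by an averaging argument over a random ordering of $[n]$ restricted to a well-chosen substructure on the $V$-side, and use the surplus $\varepsilon n^{r-1}=|H|-(t-1)\binom{n}{r-1}$ over the crosscut value to force a positive fraction of copies to land with heavy attachment sites. The key dichotomy is that if \emph{every} potential attachment site were light, then the edges of $H$ could be covered by a crosscut of size $t-1$, whence $|H|\le(t-1)\binom{n}{r-1}+o(n^{r-1})$ by~\eqref{eq:psi1}, contradicting the surplus. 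Finally, sharpness when $t\le s$ is immediate: then $\sigma(\mathcal{T})=\min\{s,t\}=t$, so the crosscut construction already gives $\ex_r(n,\mathcal{T})\ge|\Psi^1_{t-1}(n,r)|=(t-1)\binom{n}{r-1}+o(n^{r-1})$ by~\eqref{eq:cclower}.
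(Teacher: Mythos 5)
Your outline has the right instincts---the asymmetry $b<a$ is why the $V$-side governs the bound, greedy attachment needs heavy attachment sites, the funnelling of all heavy degrees through a small set is the real enemy, and sharpness for $t\le s$ follows from \eqref{eq:cclower}---but the step that carries the whole proof is missing. Your ``key dichotomy'' asserts: if every potential attachment site is light, then the edges of $H$ can be covered by a crosscut of size $t-1$, whence $|H|\le (t-1)\binom{n}{r-1}+o(n^{r-1})$. No argument is offered for this implication, and as stated it is false. Consider an $r$-graph in which any two edges share at most $b-1$ vertices (a packing); a maximum such packing has $\Theta(n^b)$ edges. Any two intersecting edges of $\mathcal{T}$ share a whole blob, hence at least $b$ vertices, so the packing is $\mathcal{T}$-free; every $a$-set and every $b$-set lies in at most one of its edges, so every attachment site is light under your thresholds; yet each vertex covers only $O(n^{b-1})$ edges, so its minimum vertex cover has size $\Omega(n)$ and it certainly has no crosscut of size $t-1$. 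If you retreat to the weaker claim you actually need---``all attachment sites light \emph{and} $|H|\ge (t-1)\binom{n}{r-1}+\varepsilon n^{r-1}$ is impossible''---then that claim \emph{is} the hard case of the theorem: it cannot be dispatched by exhibiting a small cover, because no small cover need exist, and your deferral to ``an averaging argument over a random ordering of $[n]$ restricted to a well-chosen substructure'' names the paper's method without supplying it. As written, the light case of your dichotomy assumes what must be proved.

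For comparison, here is what the paper does at exactly this point, and where $b<a$ is used twice. First, the bipartite inequality \eqref{template1} (a tree with parts of sizes $s,t$ embeds greedily in any bipartite graph with more than $(t-1)|A|+(s-1)|B|$ edges) shows that the \emph{heavy} $a$-sets (degree at least $n^b/m$) admit a vertex cover $L$ of size $O(m)$ with $m\to\infty$---not $t-1$; producing $t-1$ vertices covering even \emph{almost} all edges is the separate stability theorem (Theorem~\ref{th8}), which needs extra hypotheses and its own section. Second, the light part is handled by a random template: $R$ a binomial random vertex set of density $\alpha=m^{-1/r}$, $A=\binom{R}{a}$, and $B$ the blocks of a uniformly random partition of the vertex set into $b$-sets avoiding $R$. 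Lemma~\ref{eglift}, an extension lemma for feasible embeddings whose key estimate $d_{H_0}(e)\le as\,\delta n^{a-1}$ is precisely where $b<a$ enters, gives the upper bound \eqref{eq1} on $E(|H_1|)$, while the inclusion probabilities satisfy $p_0>2p_1$ (again using $b\le(r-1)/2$) and give the lower bound \eqref{upper2}; comparing the two and optimizing $m$ proves the theorem, with no induction on $t$ at all. None of this two-sided counting appears in your proposal. Separately, your base case $t=1$ is a non sequitur: \eqref{eq:4} bounds $P_2(a,b)$-free families, but for $s\ge 3$ a family free of the sunflower $K_{1,s}(a,b)$ need not be $P_2(a,b)$-free, and the obvious repair (take a $b$-set of degree at least $\varepsilon' n^{a-1}$ and greedily pick $s$ disjoint petals) fails because the number of $a$-sets meeting the already-used vertices is also $\Theta(n^{a-1})$, possibly with a larger constant.
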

Indeed, in the case $t\leq s$ we have $\sigma(\mathcal{T})=t$ and~\eqref{eq:cclower} provides 
 a matching lower bound.

 A  vertex $x$ of  $T\in \mathcal{T}_{s,t}$ is called a {\em critical leaf} if $\sigma(T\setminus x)< \sigma(T)$.
In case of $t\leq s$ it simply means that $\deg_T(x)=1$ and $x\in V$.
(Similarly, a {\em critical leaf} of $\mathcal{T}=T(a,b) \in \mathcal{T}_{s,t}(a,b)$ with $t\leq s$ is a $b$-set $V_j$ in the part of size $t$ whose degree in $\mathcal{T}$ is one).
If such a vertex exists then we have a more precise upper bound.

\begin{theorem} \label{mainexact}
Suppose  $r \geq 5$, $2\leq t \leq s$, $a + b = r$, $b < a < r-1$.
\Tab 
Suppose that $T$ has a critical leaf.
Then for large enough $n$ ($n > n_0(T)$)
\[ \ex(n,\mathcal{T}) \le {n \choose r} - {n - t + 1
\choose r}.\]
If, in addition, $\tau(\mathcal{T})=t$,  then  equality holds above and the only example achieving the bound is  $\Psi_{t-1}(n,r)$.
\end{theorem}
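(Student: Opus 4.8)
The plan is to establish the upper bound $\ex(n,\mathcal{T}) \le \binom{n}{r} - \binom{n-t+1}{r} = |\Psi_{t-1}(n,r)|$ via a stability-and-extraction argument, treating Theorem~\ref{th:main} as the asymptotic skeleton and then sharpening it to an exact count by exploiting the critical leaf and the hypothesis $\tau(\mathcal{T})=t$. First I would invoke Theorem~\ref{th:main} to conclude that any $\mathcal{T}$-free $r$-graph $H$ has $|H| \le (t-1)\binom{n}{r-1} + o(n^{r-1})$, and then pass to a stability statement: either $H$ is already close (in edit distance $o(n^r)$) to the extremal configuration $\Psi_{t-1}(n,r)$, in which case a local cleaning argument will finish, or $H$ is far from it. The main work is to rule out the "far" case and then to convert "close" into the exact bound. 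For the close case, the natural move is to locate a set $C$ of $t-1$ vertices meeting almost all edges, and argue that any edge $e$ avoiding $C$ can be used, together with the near-completeness of $H$ around $C$, to embed a copy of $\mathcal{T}$; the critical leaf is exactly the ingredient that lets one build the blowup greedily while keeping the crosscut controlled.

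The key steps, in order, are as follows. \emph{Step 1:} Fix a maximum $\mathcal{T}$-free $H$ and apply Theorem~\ref{th:main} to get the asymptotic edge bound. \emph{Step 2:} Show that a $\mathcal{T}$-free $H$ of near-extremal size must contain a vertex set $C$ with $|C| \le t-1$ such that all but $o(n^{r-1})$ of the edges meet $C$; this is a standard consequence of the fact that the only way to pack $\sim(t-1)\binom{n}{r-1}$ edges without creating $\mathcal{T}$ is to concentrate them on a small cover, which in turn follows from the greedy embedding (Proposition~\ref{general-bound}) combined with the tree structure. \emph{Step 3:} Using the critical leaf $V_j$ of $T$, show that if some edge $e$ of $H$ avoids $C$ entirely, then because the vertices of $C$ have near-maximal degree one can greedily embed $T(a,b)$ by first placing the blowup of $T\setminus V_j$ using the dense neighborhoods around $C$ and then completing the critical leaf via $e$ — contradiction. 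This forces every edge of $H$ to meet $C$, hence $\tau(H) \le |C| \le t-1$, so $|H| \le |\Psi_{t-1}(n,r)| = \binom{n}{r} - \binom{n-t+1}{r}$. \emph{Step 4:} For the uniqueness/exactness under $\tau(\mathcal{T})=t$, observe that $\Psi_{t-1}(n,r)$ is itself $\mathcal{T}$-free precisely because every copy of $\mathcal{T}$ would need crosscut at least $t$ but cover at least $t$ as well; the condition $\tau(\mathcal{T})=t$ guarantees that $t-1$ vertices cannot cover $\mathcal{T}$, so the construction avoids it and achieves the bound, and the rigidity of the greedy embedding forces any extremal $H$ to be exactly $\Psi_{t-1}(n,r)$ after relabeling.

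The hard part will be Step 3, the exact embedding of $\mathcal{T}$ from an edge disjoint from the presumed cover $C$. The subtlety is that the blowup replaces each tree vertex by an $a$-set or a $b$-set, so "greedy embedding" must track $r$-element edges whose $(r-1)$-shadows overlap in prescribed $a$- or $b$-sized pieces, and one must guarantee that the neighborhoods $\Gamma_H(\cdot)$ around $C$ are large enough — on the order of $\binom{n}{r-1} - o(n^{r-1})$ — to supply the many disjoint blow-up parts simultaneously. Controlling these overlaps while respecting the bipartition $|U|=s$, $|V|=t$ and the asymmetry $b<a<r-1$ (which is exactly why the theorem excludes $a=r-1$) is where the counting becomes delicate; I expect this to require a careful inclusion–exclusion or a sunflower-type argument on the shadows to extract enough pairwise-disjoint sets to realize all $s+t$ blown-up vertices. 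The hypothesis $n > n_0(T)$ is what makes these neighborhood lower bounds effective, and the critical-leaf assumption is what reduces the embedding of $T$ to the strictly easier embedding of $T\setminus V_j$ plus one final edge.
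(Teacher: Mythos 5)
Your high-level skeleton (asymptotic bound, then stability producing a near-cover $S$ of size $t-1$, then showing no edge avoids $S$) matches the paper's, but Step 3 --- the heart of the exact result --- has a genuine gap. You claim that any edge $e$ avoiding the cover $C$ can be completed to a copy of $\mathcal{T}$ by greedily building the blowup of $T$ minus its critical leaf around $C$ and then attaching $e$ as the critical-leaf edge. This fails: to attach $e$, the $a$-set $e_0 \subset e$ playing the role of the neighbor of the critical leaf must itself lie in further edges of the embedding, i.e.\ $e_0$ must belong to the $a$-shadow of the dense part $G$ of edges meeting $C$. Nothing in the stability statement guarantees this: the edges of $H_0 = \{e \in H : e \cap C = \emptyset\}$ can form a cloud whose shadow is entirely disjoint from $\partial G$, and then no greedy embedding starting from an edge of $H_0$ can ever be completed. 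The paper confronts exactly this dichotomy: either $\partial H_0 \cap \partial G \neq \emptyset$, in which case a greedy embedding (using a degree-regularized subgraph $G$ furnished by its Claim~3 --- the cleaning step you gesture at but never carry out) produces $\mathcal{T}$, or $\partial H_0 \cap \partial G = \emptyset$, and that case cannot be ruled out by any embedding argument.

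The missing idea is a counting trade-off, not an embedding. The paper shows that every ``potential tree'' --- a copy of the blowup of $T$ minus its critical-leaf edge, placed so that $S$ serves as its crosscut and so that an edge of $H_0$ would complete it to $\mathcal{T}$ --- must contain an $r$-set with exactly one vertex in $S$ that is \emph{missing} from $H$; comparing the number of potential trees per $a$-set of $\partial_a H_0$ with the number of potential trees through a fixed missing edge yields at least $c\,|\partial_a H_0|\, n^{b-1}$ missing edges, hence
\[ |H| \le \binom{n}{r} - \binom{n-t+1}{r} + |H_0| - c\,|\partial_a H_0|\, n^{b-1}. \]
Then $|H_0| \le c'\,|\partial H_0|$ by Proposition~\ref{general-bound}, $|\partial H_0| = o(n^{r-1})$ from the shadow-disjointness above, and the Kruskal--Katona theorem (writing $|\partial H_0| = \binom{x}{r-1}$ with $x = o(n)$ forces $|\partial_a H_0| \ge \binom{x}{a}$) make the correction term strictly negative unless $H_0 = \emptyset$, which gives both the upper bound and the uniqueness. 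This quantitative step, which converts ``few edges avoid $S$'' into ``no edge avoids $S$,'' is absent from your proposal, and the sunflower or inclusion--exclusion arguments you anticipate cannot substitute for it, since they still presuppose a connection between $\partial H_0$ and the dense part.
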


Since $\tau(\Psi_{t-1}(n,r))=t-1$, no  $r$-graph $F$ with $\tau(F)\geq t$ is contained in $ \Psi_{t-1}(n,r)$.

\section{Asymptotics}

In this section we prove the asymptotic version of our main results, i.e., Theorem~\ref{th:main}.

\subsection{Definition of templates and a lemma.}

Throughout this section, $\mathcal{T} \in \mathcal{T}_{s,t}(a,b)$
and we suppose $\mathcal{T}$ is an $(a,b)$-blowup of a tree
$T$. If $H$ is an $r$-graph, then an {\em
$(a,b)$-template in $H$} is a pair $(A,B)$ where $A$ is
an $a$-uniform hypergraph on $V(H)$, $B$ is a $b$-uniform
matching on $V(H)$, and $V(A) \cap V(B) = \emptyset$.
Define the bipartite graph
\[ H_0 = H_0(A,B) = \{(e,f) \in A \times B : e \cup f \in H\}\]
and let $H_1 = H_1(A,B) = \{e \cup f : (e,f) \in H_0\}
\subset H$. By construction,  $|H_0| = |H_1|$.
We claim that {\em if $A$ and $B$ are
both matchings and $H_1(A,B)$ is $\mathcal{T}$-free, then}
\begin{equation}\label{template1}
|H_1(A,B)| \leq (t - 1)|A| + (s - 1)|B|.
\end{equation}
Indeed, otherwise
$|H_0(A,B)|=|H_1(A,B)| > (t - 1)|A| + (s - 1)|B|$
and $H_0$ has a minimum induced subgraph $H'_0(A',B')$
satisfying $|H'_0(A',B')|> (t - 1)|A'| + (s - 1)|B'|$. By minimality,
 $H'_0$ has
 minimum degree at least $t$ in $A'$ and
minimum degree at least $s$ in $B'$. This is
 sufficient to greedily construct a copy of $T$ in
$H'_0$. Since $H_1$ is an $(a,b)$-blowup of $H_0\supseteq H'_0$, this
shows $\mathcal{T} \subset H_1$.

We now prove a version
of (\ref{template1}) for templates, i.e., in the case when $A$
may  be not a matching:

\begin{lemma}\label{eglift}
Let $\delta > 0$ and let $\mathcal{T} \in
\mathcal{T}_{s,t}(a,b)$. Let $H$ be a $\mathcal{T}$-free
$r$-graph containing an $(a,b)$-template $(A,B)$. If $B =
B^0 \sqcup B^1$ and $d_H(e) \leq \delta n^b$ for every
$a$-set $e \subset V(H) \backslash V(B^1)$, then
\begin{equation}\label{template2}
|H_1(A,B)| \leq (t - 1) |A| + as n^{a - 1} (\delta|B^0| + |B^1|).
\end{equation}
\end{lemma}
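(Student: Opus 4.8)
The plan is to reduce the template inequality~\eqref{template2} to the matching inequality~\eqref{template1} by replacing the (possibly non-matching) $a$-uniform hypergraph $A$ with a large collection of matchings. The first step is to partition the bipartite graph $H_0(A,B)$ according to the $b$-edge endpoints: write $H_0 = H_0^0 \sqcup H_0^1$, where $H_0^0$ collects pairs $(e,f)$ with $f \in B^0$ and $H_0^1$ collects pairs with $f \in B^1$. Since $|H_1| = |H_0|$, it suffices to bound $|H_0^0|$ and $|H_0^1|$ separately. The two summands $\delta|B^0|$ and $|B^1|$ in~\eqref{template2} already suggest the right split: the pairs over $B^1$ are handled crudely (each $f \in B^1$ contributes at most the number of $a$-sets completing it to an edge, trivially at most $\binom{n}{a} \le n^a$, and the factor $as\, n^{a-1}$ absorbs this), whereas the pairs over $B^0$ are where the degree hypothesis $d_H(e) \le \delta n^b$ does real work.

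For the main term over $B^0$, the idea is to decompose $A$ into matchings. A standard greedy/coloring argument shows that any $a$-uniform hypergraph $A$ on $V(H)$ can be edge-partitioned into matchings $M_1, \dots, M_q$; each matching $M_i$, together with $B^0$, forms a genuine \emph{matching template}, so~\eqref{template1} applies to $H_1(M_i, B^0)$ to give
\[
|H_1(M_i, B^0)| \le (t-1)|M_i| + (s-1)|B^0|.
\]
Summing over $i$ recovers $(t-1)|A|$ for the first part, but produces an error term $(s-1)|B^0|\, q$ that depends on the number of matchings $q$ in the decomposition. The crux is therefore to control $q$: I would instead process the $a$-edges of $A$ greedily and charge each contributing pair $(e,f) \in H_0^0$ against the degree budget. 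Concretely, the key inequality is
\[
|H_0^0| = \sum_{f \in B^0} \#\{e \in A : e \cup f \in H\} \le \sum_{e \in A:\, e \cup f \in H} d_H(e),
\]
and here the hypothesis $d_H(e) \le \delta n^b$ bounds the over-counting; the factor $n^{a-1}$ (with the combinatorial constant $as$) comes from estimating how many $a$-sets $e$ can be relevant to each of the $|B^0|$ vertices of $B^0$.

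The step I expect to be the main obstacle is making the decomposition-and-charging argument yield precisely the coefficient $(t-1)|A|$ with \emph{no} $A$-dependence in the error, while simultaneously extracting the factor $\delta|B^0|$ rather than a weaker $|B^0|$ times the number of matchings. The natural resolution is to apply the matching inequality~\eqref{template1} only to isolate the $(t-1)|A|$ contribution, and to bound the residual surplus directly via the degree condition $d_H(e)\le \delta n^b$: each $a$-set $e$ that fails to be ``greedily'' absorbed contributes at most $\delta n^b$ completions, and summing over the at most $s\,n^{a-1}$ relevant $a$-sets per $b$-edge (the factor $s$ accounting for the $s$ possible roles of $e$ in a copy of $T$) gives the $as\,n^{a-1}(\delta|B^0| + |B^1|)$ term. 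Balancing these two bookkeeping tasks so that the constants match the stated bound is the delicate part; everything else is routine.
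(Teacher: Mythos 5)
Your proposal has two genuine gaps, and both trace back to the same missing idea: the paper's proof is not a reduction to \eqref{template1} at all, but a direct greedy-embedding argument in which the degree hypothesis is invoked precisely at the moment a partial embedding of $\mathcal{T}$ gets stuck. First, your treatment of the $B^1$ term fails: you bound the contribution of each $f \in B^1$ by ``trivially at most $\binom{n}{a} \le n^a$'' and claim the factor $as\,n^{a-1}$ absorbs this, but $n^a$ exceeds $as\,n^{a-1}$ by a factor of $n/(as)$, so it does not. The bound $as\,n^{a-1}$ per edge of $B^1$ is not a trivial degree bound; in the paper it arises because, when a feasible partial copy of $\mathcal{T}$ cannot be extended at a $b$-set $e \in B^1$, every $a$-set $f$ with $e \cup f \in H_1$ must meet the at most $as$ vertices of the partial embedding lying in $V(A)$, giving $d_{H_0}(e) \leq as\,n^{a-1}$. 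Without a partial embedding to intersect, nothing forces this bound.

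Second, the $B^0$ term. Your matching decomposition of $A$ yields an error $(s-1)|B^0|\,q$ with $q$ the number of matchings; in the intended application $A = \binom{R}{a}$ for a linear-sized random set $R$, so $q = \Theta(n^{a-1})$ (up to factors of $\alpha$) and this error swamps the main term $(t-1)|A|$ and carries no factor $\delta$. No summation of \eqref{template1} can ever produce $\delta$, because \eqref{template1} makes no reference to the degree hypothesis. Your fallback ``charging'' inequality is also dimensionally inconsistent: $d_H(e) \le \delta n^b$ counts $b$-set completions of an $a$-set, while you need to bound the number of $a$-sets adjacent to a fixed $b$-set; multiplying $\delta n^b$ by ``$s\,n^{a-1}$ relevant $a$-sets'' gives $s\delta n^{r-1}$ per $b$-edge, not $as\delta n^{a-1}$, and you give no reason why only $s\,n^{a-1}$ $a$-sets would be relevant. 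The missing mechanism is the paper's re-splitting trick: assume the bound fails, delete low-degree vertices of $H_0$ so that $d_{H_0}(e) \geq t$ on $A$ and $d_{H_0}(e) > \beta_i$ on $B^i$ (with $\beta_0 = as\delta n^{a-1}$, $\beta_1 = as n^{a-1}$), then greedily extend feasible embeddings of blowups of subtrees of $T$. When an extension is stuck at $e \in B^0$, each offending edge $e \cup f$ is re-partitioned as $e' \cup g$, where $e'$ is the $a$-set consisting of $e$ itself, one vertex of the partial embedding in $V(A)$, and $a-b-1$ further vertices of $f$; this needs $b < a$ (which the paper flags as crucial and which your argument never uses), and the hypothesis $d_H(e') \le \delta n^b$ then bounds the completions $g$, yielding $d_{H_0}(e) \le as \cdot n^{a-b-1} \cdot \delta n^b = as\delta n^{a-1}$, contradicting the minimum degree. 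That is where $\delta$ enters, and your outline has no substitute for it.
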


\vspace{-0.2in}

\begin{proof} Let $\beta_0 =  a s \delta n^{a - 1}$ and $\beta_1 = as n^{a - 1}$. Let $H_1 = H_1(A,B)$ and $H_0 = H_0(A,B)$ and suppose
$|H_1| \geq (t - 1)|A| + \beta_0 |B^0| + \beta_1 |B^1|$.
By deleting vertices of $H_0$, we may assume
\begin{equation}\label{1124}
\mbox{\em
 $d_{H_0}(e)
\geq t$ for all $e \in A$ and for $i \in \{0,1\}$,
$d_{H_0}(e) > \beta_i$ for all $e \in B^i$. }
\end{equation}
Suppose
$\mathcal{T}$ is a blowup of a tree $T$, where $T$ has a unique
bipartition $(U,V)$ with $|U|=s, |V|=t$. We call an embedding of the $(a,b)$-blowup of a subtree
$T'$ of $T$ in $H_1(A,B)$ a {\em feasible embedding} if the $a$-sets corresponding to vertices
in $U$ are mapped to members of $A$ and the $b$-sets corresponding to vertices in $V$ are mapped to members of $B$.
It suffices to prove that any feasible embedding  $h$ of the $(a,b)$-blowup of any proper subtree $T'$ of $T$
can be extended to a feasible embedding $h'$ of the $(a,b)$-blowup of a subtree of $T$ that strictly contains $T'$.

Let $T'$ be given. Then there exists an edge $xy$ in $T$ with $x\in V(T')$ and $y\notin V(T')$.
Let $h$ be a feasible embedding of the $(a,b)$-blowup $\mathcal{T'}$ of $T'$ in $H_1(A,B)$.
First suppose that $x\in U$. Let $e$ denote the image under $h$ of $a$-set in $\mathcal{T'}$ that corresponds to $x$.
By our assumption $e\in A$. Hence by our earlier assumption, $d_{H_0}(e)\geq t$. Thus
$|\Gamma_{H_1}(e)|\geq t$. Since $\Gamma_{H_1}(e)\subseteq B$ is a matching of size at least $t$
and the $b$-sets corresponding to $V -  \{y\}$ are mapped to at most $t-1$ members of $B$,
there exists $f\in B$ such that $f\cap V(h(\mathcal{T'}))=\emptyset$. We can extend $h$ to a feasible embedding
of $T'\cup xy$ by mapping the $b$-set in $\mathcal{T}$ corresponding to $y$ to $f$.

Next, suppose $x\in V$. Let $e$ denote the image under $h$ of the $b$-set in $\mathcal{T'}$ that corresponds to $x$.
If there exists $f\in \Gamma_{H_1}(e)-V(h(\mathcal{T'}))$, then $h(\mathcal{T'})\cup \{e\cup f\}$ is a feasible embedding
of $T'\cup xy$. Hence we may assume that no such $f$ exists. If $e \in B^0$, then
we estimate  $d_{H_0}(e)$ by adding $a-b$ new vertices, one from $V(h(\mathcal{T'}))$ and all outside $V(B^1)$.
This yields
\[ d_{H_0}(e) \leq |V(h(\mathcal{T'})) \cap V(A)| \cdot n^{a - b - 1} \cdot \delta n^{b} \leq as \delta n^{a - 1} = \beta_0,\]
a contradiction to~\eqref{1124}. Note it is crucial here that $b < a$.
Similarly, if $e \in B^1$, then
\[ d_{H_0}(e) \leq |V(h(\mathcal{T'})) \cap V(A)| \cdot n^{a - 1} \leq as n^{a - 1} = \beta_1.\]
This contradicts $d_{H_0}(e) > \beta_1$ for $e \in B^1$. Hence we have shown that each feasible embedding of
$\mathcal{T'}$ can be extended. This completes the proof.
\end{proof}

\subsection{Proof of Theorem \ref{th:main}.}
In a few places of the proof we will use the following elementary fact or a slight variant of it.
Let $e$ be a fixed edge in $\binom{[n]}{p}$ and $H$ a $p$-graph on at most $n$ vertices.
Let $L$ be a copy of $H$ in $\binom{[n]}{p}$ chosen uniformly at random among all copies of $H$.
Then $P(e\in L)=|H|/\binom{n}{p}$.

Let $m$ be an integer satisfying $m > r^r$ and $m =
o(\sqrt{n})$. Let $f(m) = m^{-1/r} n^{r-1} + m^2 n^{r -
2}$. We show that if $H$ is $\mathcal{T}$-free for some
$\mathcal{T} \in \mathcal{T}_{s,t}(a,b)$, then
\[ |H| \leq (t - 1){n \choose r - 1} + O(f(m)).\]
In particular, taking $m = n^{1/3}$, we obtain
\[ |H| \leq (t - 1){n \choose r - 1} + O(n^{r - 1 -
1/(3r)}).\]

 \medskip
In our arguments below, for convenience, we assume $b$ divides $n$, since assuming so has
no effect on the asymptotic bound we want to establish.
Let  $D = \{e \in {V(H) \choose a} : d_H(e) \geq
n^{b}/m\}$ and $L$ be a smallest vertex cover of $D$,
meaning that every set in $D$ intersects $L$.
We claim
\begin{equation}\label{L=O}
|L| = O(m).
\end{equation}
Indeed, if $|L| \geq as m$, then $D$
has a matching $M$ of size $s m$. Each set in $M$ forms
an edge of $H$ with at least $n^b/m$ different $b$-sets,
and at most $a|M|n^{b-1}=asmn^{b-1}$ of these $b$-sets
intersect $V(M)$. By averaging, there is a matching $N$
of $b$-sets disjoint from $V(M)$ such that
\[ |H_0(M,N)| \geq \frac{|M|(n^b/m - asmn^{b-1})}{{n-1 \choose b-1}} > |M| \cdot \frac{n}{m} - |M| \cdot as m.\]
Since $n$ is large and $m = o(\sqrt n)$,  this is at
least
\[ (t - 1)|M| + \Bigl(\frac{n}{m} - t + 1 - asm\Bigr)|M|
\geq (t - 1)|M| + (s - 1)n > (t - 1)|M| + (s - 1)|N|.\]
By (\ref{template1}), we conclude that $\mathcal{T}
\subset H_1(M,N) \subset H$, a contradiction. This proves~\eqref{L=O}.

\medskip

Let $G = \{e \in H : |e \cap L| \leq 1\}$, so that
\begin{equation}\label{fbound}
|G| \geq |H| - |L|^2 n^{r-2} \geq |H| - O(m^2 n^{r - 2}).
\end{equation}
Let $R \subset V(G) \backslash L$ be a set whose elements
are chosen independently with probability $\alpha =
m^{-1/r}$, and $A = {R \choose a}$. Let $P$ be a random
partition of $V(G)$ into $b$-sets. Let $B$ denote the set
of $b$-sets in $P$ that are disjoint from $R$, and let
$H_1 = H_1(A,B)$. If $B^0 = \{e \in B : e \cap L =
\emptyset\}$ and $B^1 = \{e \in B : |e \cap L| \geq 1\}$,
then by (\ref{template2})  with $\delta = 1/m$, and using
$|B^1| \leq |L|$,
\[ |H_1| \leq (t - 1)|A| + O(n^{a-1}|B^0|/m) + O(n^{a-1}|L|).\]
Taking expectations over all choices of $R$ and $P$ and
using~\eqref{L=O} and $|B^0| \leq n$, we get
\begin{equation}\label{eq1}
E(|H_1|) \leq (t - 1)\alpha^a {n \choose a} + O(n^a/m).
\end{equation}
For $i \in \{0,1\}$, let $G_i = \{e \in G : |e \cap L| =
i\}$ and note $G = G_0 \cup G_1$. We observe that for an
edge $e \in G_0$,
\[ P(e \in H_1) = \frac{{r \choose b} \alpha^a (1 -
\alpha)^b}{{n - 1 \choose b - 1}} := p_0\] and for an
edge $e \in G_1$,
\[ P(e \in H_1) = \frac{{r - 1 \choose b - 1} \alpha^a (1
- \alpha)^{b - 1}}{{n - 1 \choose b - 1}} := p_1.\]
Since
$\alpha = m^{-1/r} < 1/r$ and $b \le (r-1)/2$,
$$p_0 = \frac{r}{b}(1 - \alpha)p_1 >  2p_1.
      $$ 
Therefore
\begin{eqnarray}
E(|H_1|)\geq p_0|G_0| + p_1|G_1| &=& (p_0 - p_1)|G_0| + p_1|G| > p_1|G|=
\frac{\alpha^a (r - 1)!(1-\alpha)^{b-1}}{a!n^{b - 1}}|G|.\label{upper2}
\end{eqnarray}
Combining this with (\ref{eq1}), using $(1-\alpha)^{-b+1}
= 1 - O(m^{-1/r})$ and after some simplification, we find
\begin{eqnarray*}
|G| &\leq& (t - 1){n \choose r - 1} + O(\alpha n^{r -
1}) + O(n^{r - 1}/\alpha^a m)  \\
&\leq& (t - 1){n \choose r - 1} + O(m^{-1/r}n^{r - 1}).
\end{eqnarray*}
Together with (\ref{fbound}), this gives the required
bound on $|H|$.  \qed

In fact, the proof of  Theorem \ref{th:main}
yields more then the theorem claims. We have the following fact.

\begin{corollary}\label{mainprime}
Let $0<\gamma< 1/t$, $b < a < r$, $a+b=r$, $t \leq s$. Let $n$ be
sufficiently large,  $r^r < m \le n^{\gamma}$ and $f(m) = m^{-1/r} n^{r-1} + m^2 n^{r - 2}$.
Let
$\mathcal{T} \in
\mathcal{T}_{s,t}(a,b)$  and  $H$ be
an $n$-vertex $\mathcal{T}$-free $r$-graph. If
\begin{equation}\label{asymptotic}
|H| = (t - 1){n \choose r - 1} + O(f(m))
\end{equation}
then some $F \subset H$ with $|F| = |H| - O(f(m))$ has a crosscut $L$ of size $O(m)$.
\end{corollary}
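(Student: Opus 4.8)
The plan is to extract the set $F$ directly from the workings of the proof of Theorem~\ref{th:main}, since the quantity $L$ produced there is already a vertex cover of a robust family $D$, and a vertex cover is almost a crosscut. First I would recall the two structural objects built in that proof: the family $D = \{e \in \binom{V(H)}{a} : d_H(e) \geq n^b/m\}$ of high-degree $a$-sets, together with its smallest vertex cover $L$, which satisfies $|L| = O(m)$ by~\eqref{L=O}. The natural candidate for the promised crosscut is this very $L$. The issue is that $L$ covers the $a$-sets of $D$, whereas a crosscut must meet \emph{every edge} of $F$ in exactly one vertex; so I must both (i) pass to a subfamily $F \subseteq H$ whose edges all meet $L$, and (ii) discard the few edges meeting $L$ in two or more vertices.

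The key steps, in order, are as follows. I would first observe that under hypothesis~\eqref{asymptotic} the bound $|H| \le (t-1)\binom{n}{r-1} + O(f(m))$ proved in Theorem~\ref{th:main} is essentially tight, so the slack quantities in that proof must all be small. In particular, set $G = \{e \in H : |e \cap L| \le 1\}$ exactly as in~\eqref{fbound}, so $|G| \ge |H| - O(m^2 n^{r-2}) = |H| - O(f(m))$. Next I would argue that almost all edges of $G$ actually meet $L$: an edge disjoint from $L$ contains no $a$-subset in $D$ (since $L$ covers $D$), hence every $a$-subset $e$ of such an edge has $d_H(e) < n^b/m$, and counting edges all of whose $a$-shadow avoids $D$ gives at most $O(\binom{n}{a} \cdot n^b/m) = O(n^r/m) = O(f(m)/n^{?})$ such edges. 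Then I would set $F = \{e \in G : |e \cap L| = 1\} = G_1$ in the notation of the proof, so that $L$ meets each edge of $F$ in exactly one vertex, i.e.\ $L$ \emph{is} a crosscut of $F$ with $|L| = O(m)$. Finally, collecting the discarded pieces---edges with $|e \cap L| \ge 2$, and edges of $G_0$ disjoint from $L$---shows $|F| = |H| - O(f(m))$, which is the claimed conclusion.

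The main obstacle I anticipate is step~(ii): cleanly bounding the number of edges of $G_0$ (those disjoint from $L$) by $O(f(m))$. The delicate point is that disjointness from $L$ forces every $a$-subset to have low $H$-degree, but one must convert this \emph{local} degree bound into a \emph{global} count of such edges without losing a factor. The clean way is the double count $(r-b)|G_0| \le \sum_{e \in \binom{V(H) \setminus L}{a}} d_H(e) < \binom{n}{a} \cdot n^b/m = O(n^r/m)$, using that each edge of $G_0$ contributes through its $a$-subsets and that all those subsets lie outside $D$; this yields $|G_0| = O(n^r/m) \le O(f(m))$ exactly when $m^{-1/r} n^{r-1}$ dominates $n^r/m$, which holds since $m^{1-1/r} \gg n / n^{?}$ fails unless one is careful about the exponent of $n$. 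I would therefore double-check the arithmetic of $f(m)$ against $n^r/m$ and, if needed, absorb this term by noting $n^r/m \le m^{-1/r} n^{r-1}$ precisely in the regime $m \le n^{\gamma}$ with $\gamma < 1/t$, so that the whole discarded mass is $O(f(m))$ as required.
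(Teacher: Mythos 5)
Your overall skeleton matches the paper's: reuse the cover $L$ from \eqref{L=O}, keep the decomposition $G=G_0\cup G_1$ from \eqref{fbound}, set $F=G_1$ so that $L$ is by construction a crosscut of $F$, and reduce everything to showing $|G_0|=O(f(m))$. The genuine gap is in that last step. Your double count only gives $|G_0|=O(n^r/m)$, and this is \emph{not} $O(f(m))$ in the stated regime: since $m\le n^{\gamma}$ with $\gamma<1/t\le 1/2$, we have $n^r/m\ge n^{r-\gamma}\ge n^{r-1/2}$, while $f(m)=m^{-1/r}n^{r-1}+m^2n^{r-2}\le 2n^{r-1}$, so your bound overshoots by at least a factor of $n^{1/2}$. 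Your proposed rescue is backwards: $n^r/m\le m^{-1/r}n^{r-1}$ is equivalent to $m^{(r-1)/r}\ge n$, i.e.\ $m\ge n^{r/(r-1)}$, which is incompatible with $m\le n^{\gamma}<n$. More fundamentally, no purely deterministic count based on ``every $a$-subset of an edge of $G_0$ has $H$-degree less than $n^b/m$'' can succeed: that constraint by itself permits roughly $\binom{n}{a}\cdot n^b/m\sim n^r/m\gg n^{r-1}$ edges, i.e.\ far more than $|H|$ itself, so the $\mathcal{T}$-freeness of $H$ must be invoked a second time to control $G_0$.

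The paper closes exactly this gap by reusing the probabilistic template comparison from the proof of Theorem~\ref{th:main} rather than the definition of $D$. The lower bound \eqref{upper2} for $E(|H_1|)$ contains the extra term $(p_0-p_1)|G_0|$ on top of $p_1|G|$, while the upper bound \eqref{eq1} is $(t-1)\alpha^a\binom{n}{a}+O(n^a/m)$; under hypothesis \eqref{asymptotic} the term $p_1|G|$ already nearly saturates the upper bound, forcing $(p_0-p_1)|G_0|=O(n^a/m)$. Since $p_0\ge (1+1/r)p_1$ (indeed $p_0>2p_1$ because $b<a$ gives $b\le (r-1)/2$), this yields $p_1|G_0|=O(n^a/m)$, and since $p_1=\Theta\bigl(\alpha^a n^{1-b}\bigr)$ with $\alpha=m^{-1/r}$, one gets
\[ |G_0| \;=\; O\!\left(\frac{n^{a+b-1}m^{a/r}}{m}\right) \;=\; O\!\left(n^{r-1}m^{-b/r}\right) \;=\; O\!\left(m^{-1/r}n^{r-1}\right) \;=\; O(f(m)). \]
Note the mechanism: edges of $G_0$ are penalized not because they have low degree, but because they are \emph{more} likely (by a constant factor) to survive into the random template $H_1$ than edges meeting $L$, so having many of them would violate the expectation cap that $\mathcal{T}$-freeness imposes via \eqref{eq1}. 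If you replace your counting step by this comparison, the rest of your argument (the choice $F=G_1$ and the bookkeeping $|F|=|H|-O(f(m))$) is correct and coincides with the paper's proof.
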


\begin{proof}
If $|H| = (t - 1){n \choose r - 1} + O(f(m))$, then the
upper and lower bounds for $E(|H_1|)$ given by
(\ref{eq1}) and (\ref{upper2}) differ by $O(n^a/m)$. By (\ref{upper2}) they also differ by at
least $(p_0 - p_1)|G_0|$ so
\[  (p_0 - p_1)|G_0| = O(n^a/m).\]
Using $p_0 > (1 + 1/r)p_1$, we get $p_1|G_0| = O(n^a/m) $ and this shows $|G_0| = O(f(m))$. Setting $F = G_1$,
$L$ is a crosscut of $F$ and $|F| = |H| - O(f(m))$.
\end{proof}

\section{Stability}

The aim of this section is to prove the following
stability theorem. It is important throughout this
section that $t \leq s$, so that for $\mathcal{T} \in
\mathcal{T}_{s,t}(a,b)$, we have $\sigma(\mathcal{T}) =
t$ and therefore $\Psi^1_{t-1}(n,r)$ does not contain
$\mathcal{T}$. The following theorem says that if $H$ is
a $\mathcal{T}$-free $r$-graph on $n$ vertices and $|H|
\sim |\Psi_{t-1}(n,r)|$, then $H$ is obtained by adding or
deleting $o(n^{r - 1})$ edges from $\Psi_{t-1}(n,r)$.

\begin{theorem}\label{th8}
Let $\mathcal{T} \in \mathcal{T}_{s,t}(a,b)$, where $b <
a < r - 1$, $t \leq s$. 
Let $H$ be a
$\mathcal{T}$-free $n$-vertex $r$-graph with $|H| \sim (t
- 1){n \choose r - 1}$. If $\mathcal{T}$ has a critical leaf,
then there exists a set $S$ of $t
- 1$ vertices of $H$ such that $|H - S| = o(n^{r - 1})$.
\end{theorem}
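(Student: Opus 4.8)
The plan is to bootstrap the asymptotic count from Theorem~\ref{th:main} and Corollary~\ref{mainprime} into a structural statement using the extra hypothesis that $\mathcal{T}$ has a critical leaf. The starting point is Corollary~\ref{mainprime}: since $|H| \sim (t-1)\binom{n}{r-1}$, applying the corollary with a suitable $m = n^{\gamma}$ produces a subhypergraph $F \subset H$ with $|F| = |H| - o(n^{r-1})$ that admits a crosscut $L$ of size $O(m) = o(n)$. Thus almost all edges of $H$ meet $L$ in exactly one vertex. The goal is to sharpen this crosscut $L$ of size $o(n)$ down to a set $S$ of exactly $t-1$ vertices carrying all but $o(n^{r-1})$ of the edges; informally, $H$ looks like $\Psi_{t-1}(n,r)$ after cleaning.

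\medskip

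First I would record the rough structure: set $L$ aside and note that, because $L$ is a crosscut of $F$, each edge of $F$ has exactly one vertex in $L$, so $F$ partitions (up to the $o(n^{r-1})$ error) into the link hypergraphs $\{\Gamma_F(\{v\}) : v \in L\}$, each an $(r-1)$-graph on $V(H)\setminus L$. The total edge count forces $\sum_{v\in L} d_F(v) \sim (t-1)\binom{n}{r-1}$. The key is now a \emph{weight/degree argument}: I claim that only about $t-1$ vertices of $L$ can have link of size $\Omega(n^{r-1})$, and the contribution of the low-degree vertices of $L$ is negligible. The mechanism enforcing this is exactly the critical leaf. Let $x$ be the critical leaf of $T$, a degree-one vertex in the part $V$ of size $t$; deleting $x$ gives a tree $T\setminus x \in \mathcal{T}_{s,t-1}$ whose crosscut number drops to $t-1$. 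If $t$ distinct vertices $v_1,\dots,v_t \in L$ all had links of size $\Omega(n^{r-1})$, I would try to build a copy of $\mathcal{T}$ by embedding the $(a,b)$-blowup of $T\setminus x$ using $t-1$ of these high-degree vertices as the images of the $b$-sets indexed by $V\setminus\{x\}$, and then using the high degree at the remaining vertex together with a greedy disjointness argument (as in the proof of Lemma~\ref{eglift}) to attach the blowup of the last edge $x$. The dense links give enough room to choose the required $a$-sets and $b$-sets pairwise disjoint, so $\mathcal{T}\subset H$, a contradiction.

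\medskip

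Carrying this out, I would fix a threshold, say declare $v\in L$ \emph{heavy} if $d_F(v)\ge \eta n^{r-1}$ for a small constant $\eta$, and let $S$ be the set of heavy vertices. The embedding argument above, applied with the blowup of the critical-leaf tree, should show $|S| \le t-1$: the existence of $t$ heavy vertices (or even $t-1$ heavy vertices plus enough residual high-degree structure to supply the critical-leaf edge) would produce $\mathcal{T}$. Combined with the count $\sum_{v\in L} d_F(v)\sim (t-1)\binom{n}{r-1}$ and $|L|=o(n)$, the light vertices contribute at most $|L|\cdot \eta n^{r-1} = o(n^{r-1})$ edges, so $|H-S| \le |H|-\sum_{v\in S} d_F(v) + o(n^{r-1}) = o(n^{r-1})$, and a final check forces $|S|=t-1$ exactly (fewer heavy vertices cannot account for $(t-1)\binom{n}{r-1}$ edges). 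This yields the desired $S$.

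\medskip

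The main obstacle I expect is the embedding step that converts ``$t$ heavy vertices of $L$'' into a copy of $\mathcal{T}$, and in particular making the critical-leaf mechanism do real work. The subtlety is that a heavy vertex $v\in L$ gives a dense $(r-1)$-graph link, but to embed the blowup of $T$ I need the $b$-sets hanging off $v$ (the part-$V$ images) and the $a$-sets (the part-$U$ images) to be realizable disjointly across several heavy vertices simultaneously, respecting the tree's adjacency pattern. Translating ``dense link'' into ``contains the blowup of a spanning subtree with prescribed disjointness'' is where I would need a template/greedy extension argument like the one in Lemma~\ref{eglift}, adapted so that the last edge incident to the critical leaf $x$ can always be attached because dropping $x$ only needs $t-1$ crosscut vertices. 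Getting the quantitative bookkeeping right — ensuring the heavy links are dense enough relative to $\eta$ and that $b<a$ leaves room to complete the $a$-sets disjointly, exactly as in the $\beta_0$ estimate of Lemma~\ref{eglift} — is the technical heart of the argument.
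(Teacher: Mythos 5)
Your first step (invoking Corollary~\ref{mainprime} to get $F\subset H$ with $|F|\sim|H|$ and a crosscut $L$ of size $O(m)$) is exactly how the paper begins, but the heart of your argument fails. Your key claim --- that $t$ vertices of $L$ with links of size $\Omega(n^{r-1})$ force a copy of $\mathcal{T}$ --- is false. Take $X_1,\dots,X_t$ pairwise disjoint sets of size about $n/t$ and let $H=\bigcup_{j=1}^{t}\bigl\{\{v_j\}\cup e : e\in\binom{X_j}{r-1}\bigr\}$. Every $v_j$ is ``heavy'' (link of size $\Omega_t(n^{r-1})$), yet $H$ is $\mathcal{T}$-free: each connected component is a star through one vertex, whereas $\mathcal{T}$ is connected and (since $s,t\geq 2$) contains two disjoint edges. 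The point is that heaviness of individual links carries no information about how the links of different vertices of $L$ interact, and embedding $\mathcal{T}$ requires $a$-sets that extend into several links simultaneously in the tree's adjacency pattern. Any correct proof of ``at most $t-1$ vertices matter'' must use the global hypothesis $|H|\sim(t-1)\binom{n}{r-1}$ in an essential structural way, which your embedding step never does. Your bookkeeping for the light vertices has a second gap: with $\eta$ a fixed constant and $|L|=O(m)\to\infty$, the bound $|L|\cdot\eta n^{r-1}$ is $\omega(n^{r-1})$, not $o(n^{r-1})$; and even under the total-degree constraint, a constant fraction of the edges could sit on a bounded number of ``medium'' vertices each just below your threshold, which your argument does not exclude.

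The paper avoids both problems by working with co-degrees of $(r-1)$-sets rather than vertex degrees. Claim~1 shows that all but $o(n^{r-1})$ sets $e\in\partial F - L$ satisfy $d_F(e)=t-1$ exactly, with $\Gamma_F(e)\subseteq L$; here the critical leaf enters by passing to the $(a,b-1)$-blowup $\mathcal{T}^*$ (whose Tur\'an number is $O(n^{r-2})$ by Proposition~\ref{general-bound}) and lifting it back via a pigeonholed $t$-set $S\subset L$. The shadow sets are then partitioned into classes $F_i$ according to their exact neighborhood $S_i\in\binom{L}{t-1}$, each class is cleaned to $G_i$ with minimum $a$-set and $b$-set degree conditions, and Claim~2 --- again using the critical leaf --- shows the $a$-shadows $\partial_a G_i$ are pairwise disjoint: a shared $a$-set would let one grow $W(a,b-1)$ in $G_j$, lift it by $S_j$, and close it up with an edge of $G_i$ through a vertex of $S_i\setminus S_j$. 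Finally, the Lov\'asz form of the Kruskal--Katona theorem applied to the disjoint $(r-2)$-shadows converts the count $\sum_i|G_i|\sim\binom{n}{r-1}$ into the conclusion that a single class satisfies $|G_1|\geq\binom{n}{r-1}-o(n^{r-1})$, and $S=S_1$ is the desired $(t-1)$-set. In short, the reduction from $L$ to $t-1$ vertices is achieved by shadow disjointness plus Kruskal--Katona, not by a direct embedding from degree thresholds; that replacement is precisely the missing idea in your proposal.
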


\subsection{Degrees of sets.} By Corollary~\ref{mainprime} with $r^r < m = o(n^{1/(t+1)})$ there exists
$F \subset H$ such that $|F| \sim |H|$ and $F$ has a
crosscut $L$ of size $O(m)$. Our first claim says that
most elements of $\partial F$ have degree $t - 1$ in $F$.

\medskip

{\bf Claim 1.} {\em There are ${n \choose r-1} -
o(n^{r-1})$ sets $e \in \partial F - L$ such that $d_F(e)
= t - 1$.}

{\bf Proof.} Suppose $\ell$ sets $e \in \partial F - L$
have $d_F(e) \geq t$. By the definition of $L$, $\Gamma(e)\subseteq L$ for each $e \in \partial F - L$.
Let $Z$ be a crosscut of
$\mathcal{T}$ with $|Z| = t$ contained in $B$ and let $\mathcal{T}^*
= \{e \backslash Z : e \in \mathcal{T}\}$. Then
$\mathcal{T}^*$ is an $(a,b-1)$-blowup of $T$.
Proposition~\ref{general-bound} implies 
$$\ex(n, \mathcal{T}^*) < (s + t)n^{r-2}.$$

By the pigeonhole principle, there exists a set $S
\subset L$ with $|S| = t$ such that at least $k =
\ell/|L|^{t}$ sets $e \in \partial F - L$ have
$\Gamma_F(e) \supseteq S$. If $k > \ex(n,\mathcal{T}^*)$,
then $\mathcal{T}^* \subset \partial F - L$ and for all
$e \in \mathcal{T}^*$, $\Gamma_G(e) \supseteq S$.
Now we can lift $\mathcal{T}^*$  to $\mathcal{T}
\subset F$ via $S$. Indeed, we can greedily enlarge each of the
$(b - 1)$-sets that form $\mathcal{T}^*$ to a $b$-set
by adding an element of $S$. This contradicts the choice of $H$.
We therefore suppose that
\[ \ell/|L|^{t} = k \leq \ex(n,\mathcal{T}^*) \leq (s+t)n^{r-2}\]
which gives $\ell \leq (s+t)|L|^{t}n^{r-2} = O(n^{r -
2}m^t)$. As $|F| \sim |H| \sim (t - 1){n \choose r - 1}$,
and the number of $(r-1)$-sets in $V(F)-L$ is at most ${n
\choose r-1}$, the average degree of sets in $\partial F
- L$ is at least $t - 1 - o(1)$. We have already argued
that at most $O(n^{r -2}m^t)$ of these sets have degree larger than $t -
1$. Furthermore, none of them has degree greater than $m$.
Hence the number of sets in $\partial F
- L$ of degree at most $t-2$ is $z$, then we have inequality
$$(t-1){n \choose r - 1}-x+m O(n^{r -2}m^t)\geq (t-1){n \choose r - 1}(1-o(1)).$$
Since $m\ n^{r -2}m^t=o(n^{r-1})$,
  we conclude that $x=o\left({n \choose r - 1}\right)$. This yields the claim. \qed

\medskip

\subsection{Proof of Theorem~\ref{th8}}

Let $S_1,S_2,\dots,S_k$ be an enumeration of the $(t -
1)$-element subsets of $L$, and let $F_i$ denote the
family of $(r - 1)$-element sets $e$ in $V(F) \backslash
L$ such that $\Gamma_F(e) = S_i$. By   Claim~1, $|F_1 \cup F_2
\cup \dots \cup F_k| \sim {n - |L| \choose r - 1}$.
Suppose $k \geq 2$. By definition, for $i \neq j$, $F_i \cap F_j =
\emptyset$.
 Therefore,
\[ \sum_{i = 1}^k |F_i| \sim {n \choose r - 1}.\]
For each $i \in [k]$, if $|F_i| = o(n^{r - 1}/k)$, let
$G_i$ be an empty $(r - 1)$-graph, if $|F_i| =
\Omega(n^{r-1}/k)$, then delete edges of $F_i$ containing $a$-sets or $b$-sets of "small" degree until we
obtain either an empty $(r - 1)$-graph or an $(r -
1)$-graph $G_i$ such that
\begin{equation}\label{1125}
\mbox{\em  $d_{G_i}(e) > r(s + t)n^{r - 2 - a}$ $\forall$ $a$-set $e \in\partial_a G_i$,
and $d_{G_i}(f)
> r(s + t)n^{r - 2 - b}$ $\forall$ $b$-set $f \in \partial_b G_i$.}
\end{equation}
By construction, $|G_i| \geq |F_i| -
2r(s + t)n^{r - 2}$ and since $F_i = \Omega(n^{r-1}/k)$
and $k \leq |L|^t \leq O(m^t) = o(n)$, whenever $G_i$ is
non-empty we have
\[ |G_i| = (1 - o(1))|F_i|.\]
We conclude that if $G = \bigcup G_i$ then $|G| = (1 -
o(1))|F| \sim {n \choose r - 1}$ and
\begin{equation} \label{Gi-sum}
\sum_{i = 1}^k |G_i| \sim {n \choose r - 1}.
\end{equation}

{\bf Claim 2.} {\em For $i \neq j$, $\partial_a G_i \cap
\partial_a G_j = \emptyset$.}

{\it Proof.} Let $W$ be a tree obtained from the tree $T$
by deleting a leaf vertex $x$ with unique neighbor $y \in
T$, such that $x$ is in the part of $T$ of size $t$.
Suppose some $a$-set $e$ is contained in $\partial_a G_i
\cap
\partial_a G_j$. By~\eqref{1125},  we can greedily grow $W(a,b-1)$
in $G_j$ such that $e$ is the blowup of $y$. By adding
one vertex of $S_j$ to each $b - 1$-set in $W(a,b-1)$, we
obtain $W(a,b)$. Now there exists $x' \in S_i \backslash
S_j$. Since $d_{G_i}(e) > r(s + t)n^{r - 2 - a}$, there
exists an edge $f \in G_i$ containing $e$, such that $f
\cap V(W(a,b-1)) = \emptyset$, and therefore $f \cup
\{x'\} \in F$ plus $W(a,b)$ gives the tree $T(a,b)$, with
$f \backslash e$ the blowup of $x$. This proves the
claim. $\Box$

\bigskip

Now we prove Theorem~\ref{th8}. Since $a\leq r-2$, by Claim~2, for all $i\neq j$,
$\partial_{r-2} G_i \cap \partial_{r-2} G_j=\emptyset$. Without loss of generality,
suppose that for some $0\leq p\leq k$,  $|G_1|\geq |G_2|\geq \ldots\geq |G_p|\geq 1$
and $G_i=\emptyset$ for $p+1\leq i\leq k$.
For each $i\in [p]$, let $y_i\geq r-1$ denote the real such that $|G_i|=\binom{y_i}{r-1}$.
Then $y_1\geq y_2\geq \cdots\geq y_p$.
By the Lov\'asz form of the Kruskal-Katona theorem, for each $i\in [p], |\partial_{r-2}(G_i)|\geq \binom{y_i}{r-2}$.
By the disjointness of the $\partial_{r-2}(G_i)$'s, we have
$$\sum_{i=1}^p \binom{y_i}{r-2}\leq \binom{n}{r-2}.$$
For each $i\in [p]$, since $\binom{y_i}{r-1}=\frac{y_i-r+2}{r-1}\binom{y_i}{r-2}\leq \frac{y_1-r+2}{r-1}\binom{y_i}{r-2}$,
by \eqref{Gi-sum} we have
$$(1-o(1))\binom{n}{r-1}\leq \sum_{i=1}^p |G_i|=\sum_{i=1}^p \binom{y_i}{r-1}\leq \frac{y_1-r+2}{r-1}\sum_{i=1}^p \binom{y_i}{r-2}\leq \frac{y_1-r+2}{r-1}\binom{n}{r-2}.$$
From this, we get $y_1\geq n-o(n)$. Hence $|F_1|\geq |G_1|=\binom{y_1}{r-1}\geq \binom{n}{r-1}-o(n^{r-1})$.
Hence there exists $S=S_1\subset L$ such that $(t-1)\binom{n}{r-1}-o(n^{r-1})$ edges of $F$ consists of one vertex in $S$ and $r-1$ vertices disjoint from $S$. $\Box$

\section{Exact results}

The aim of this section is to prove the following
theorem, which completes the proof of Theorem
\ref{mainexact}:

\begin{theorem}
Let $t \leq s$, $b < a < r - 1$ with $a + b = r$ and
$\mathcal{T} \in \mathcal{T}_{s,t}(a,b)$ such that $\mathcal{T}$ has a critical leaf and $\tau(\mathcal{T})=t$.
If  $n$ is large and $H$ is a
$\mathcal{T}$-free $n$-vertex $r$-graph with $|H| \geq {n
\choose r} - {n - t + 1 \choose r}$, then $H \cong \Psi_{t-1}(n,r)$.
\end{theorem}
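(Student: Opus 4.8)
The plan is to reduce the statement to showing that \emph{no edge of $H$ avoids the stability set}, and then to manufacture such an avoiding edge as the pendant edge of a copy of $\mathcal{T}$. First I would invoke the stability theorem (Theorem~\ref{th8}): since $|H|\geq\binom{n}{r}-\binom{n-t+1}{r}=(t-1)\binom{n}{r-1}+o(n^{r-1})$ and, by Theorem~\ref{th:main}, $|H|\leq (t-1)\binom{n}{r-1}+o(n^{r-1})$, we have $|H|\sim (t-1)\binom{n}{r-1}$; as $\mathcal{T}$ has a critical leaf, this yields a set $S$ with $|S|=t-1$ and $|H-S|=o(n^{r-1})$. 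Write $\Psi$ for the copy of $\Psi_{t-1}(n,r)$ built on $S$ (all $r$-sets meeting $S$), let $H_0=\{e\in H:e\cap S=\emptyset\}$, and let $m_-=|\Psi\setminus H|$ count the $S$-meeting $r$-sets missing from $H$. Since $H\setminus\Psi=H_0$, we have $|H|=|\Psi|-m_-+|H_0|$, so $|H|\geq|\Psi|$ gives $|H_0|\geq m_-$, and both are $o(n^{r-1})$. The key observation is that if $H_0=\emptyset$ then $H\subseteq\Psi$, whence $|H|\geq|\Psi|$ forces $H=\Psi\cong\Psi_{t-1}(n,r)$. Thus the entire theorem reduces to proving $H_0=\emptyset$.

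To show $H_0=\emptyset$, suppose $e_0\in H_0$ and aim to embed $\mathcal{T}=T(a,b)$ in $H$. Let $x$ be the critical leaf (a degree-one vertex in the size-$t$ part $V$) with neighbour $y\in U$; in $\mathcal{T}$ the only edge at the blown-up leaf is $U_y\cup V_x$. I would split $e_0=U_y\sqcup V_x$ with $|U_y|=a$, $|V_x|=b$, and use $e_0$ as this pendant edge. The remaining tree $T-x$ has parts $U$ (size $s$) and $V\setminus\{x\}$ (size $t-1$), and a crosscut of $(T-x)(a,b)$ is one vertex from each surviving $V_j$; I would map these $t-1$ crosscut vertices to $S$, so every edge of $(T-x)(a,b)$ is $S$-meeting and is sought inside $\Psi\cap H$. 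Because $\tau(\mathcal{T})=t>t-1=\tau(\Psi)$, the family $\Psi$ is $\mathcal{T}$-free, so the avoiding edge $e_0$ is genuinely required and must sit at the critical leaf: this is precisely why exactly one $H_0$-edge is the right device. Since $m_-=o(n^{r-1})$ implies that all but $o(n^a)$ of the $a$-sets and $b$-sets extend in an all-but-$o(1)$ fraction of directions within $\Psi\cap H$, a greedy embedding of $(T-x)(a,b)$ rooted at $y$ succeeds: at each step the $a$- or $b$-set being extended still has $\Theta(n^{b})$ (respectively $\Theta(n^{a})$) available neighbours in $\Psi\cap H$, so disjoint fresh images can always be chosen.

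The main obstacle is coordinating $e_0$ with the rest at the vertex $y$. As $s,t\geq2$, the tree forces $\deg_T(y)\geq2$, so $y$ has a second neighbour $v'\in V$ and the embedding needs $U_y\cup V_{v'}\in H$ with $V_{v'}$ meeting $S$; that is, the $a$-set $U_y\subseteq e_0$ must have large degree into the $S$-meeting edges of $H$. But $U_y$ avoids $S$, and a priori the $o(n^{r-1})$ missing edges could all concentrate on the $a$-subsets of $e_0$, since a single $a$-set can lack $\Theta(n^{b-1})=o(n^{r-1})$ of its $S$-meeting extensions. To defeat this I would first prove a robustness claim: calling an $a$-set \emph{deficient} if its degree into $\Psi\cap H$ is not near-maximum, a count of missing edges shows there are only $o(n^a)$ deficient $a$-sets. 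It then suffices to exhibit one edge of $H_0$ containing a non-deficient $a$-set, which can play the role of $U_y$; equivalently I must rule out that \emph{every} edge of $H_0$ has all $\binom{r}{a}$ of its $a$-subsets deficient. I expect this to be the crux, handled by combining $|H_0|\geq m_-$ with a sharpened (second-order, $O(n^{r-2})$) stability estimate controlling how the deficient $a$-sets and the missing edges are distributed, so that either a suitable $e_0$ exists or the greedy embedding can be re-rooted at a non-deficient $a$-set lying in an $H_0$-edge. Once such a choice is available the embedding closes, yielding $\mathcal{T}\subseteq H$, a contradiction; hence $H_0=\emptyset$ and, by the reduction above, $H\cong\Psi_{t-1}(n,r)$.
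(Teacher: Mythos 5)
Your proposal follows the paper's skeleton faithfully up to the decisive point: invoke Theorem~\ref{th8} to get $S$, reduce the theorem to $H_0=\emptyset$, and try to realize an edge $e_0\in H_0$ as the pendant edge at the critical leaf, embedding the rest of $\mathcal{T}$ (your $(T-x)(a,b)$, the paper's $\mathcal{T}'$) into the $S$-meeting edges with its crosscut placed on $S$. But the argument stops exactly at what you yourself call the crux, and that gap is genuine: nothing you have established rules out the adversarial configuration in which \emph{every} $a$-subset of \emph{every} edge of $H_0$ is deficient. Your count showing only $o(n^a)$ deficient $a$-sets does not help, because $|\partial_a H_0|$ can be far smaller than $n^a$, so all of $\partial_a H_0$ may be deficient. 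Moreover, the remedy you propose --- a ``sharpened, second-order $O(n^{r-2})$ stability estimate'' --- is not the right tool: the two quantities that must be compared, namely $|H_0|$ and the deficit of missing edges forced by $\partial_a H_0$, can both lie at scales well below $n^{r-2}$, so no improvement of the error term in a stability statement can decide between them. What actually decides it is a shadow (isoperimetric) comparison, together with the fact that $H_0$ is itself $\mathcal{T}$-free.

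Concretely, the paper closes the gap with a global double count rather than by finding a good root for a greedy embedding. For a fixed $e_0\in\partial_a H_0$, every ``potential tree'' (copy of $\mathcal{T}'$ among the $r$-sets with exactly one vertex in $S$, crosscut on $S$, attachable to $e_0$) must contain a missing edge, since otherwise $\mathcal{T}\subset H$. Counting potential trees per $e_0$ against potential trees per missing edge shows the number of missing $S$-meeting edges is at least $c\,|\partial_a H_0|\,n^{b-1}$, hence $|H|\le \binom{n}{r}-\binom{n-t+1}{r}+|H_0|-c\,|\partial_a H_0|\,n^{b-1}$. Then three ingredients absent from your proposal finish the proof: (1) since $H_0$ is $\mathcal{T}$-free and $\mathcal{T}$ sits inside a tight tree, Proposition~\ref{general-bound} gives $|H_0|\le c'|\partial H_0|$; (2) $\partial H_0\cap \partial G=\emptyset$ for the cleaned subgraph $G$ of Claim~3 (otherwise $\mathcal{T}$ embeds greedily through a common shadow element), so $|\partial H_0|=o(n^{r-1})$; (3) writing $|\partial H_0|=\binom{x}{r-1}$ with $x=o(n)$, Kruskal--Katona gives $|\partial_a H_0|\ge \binom{x}{a}$, whence $c\,n^{b-1}\binom{x}{a}\gg c'\binom{x}{r-1}\ge |H_0|$, making the displayed bound on $|H|$ drop below $\binom{n}{r}-\binom{n-t+1}{r}$ unless $H_0=\emptyset$. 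In short, the scenario you could not exclude is excluded not by stability but by the inequality $n^{b-1}\binom{x}{a}\gg\binom{x}{r-1}$ for $x=o(n)$; without the potential-tree count, the bound $|H_0|=O(|\partial H_0|)$, and Kruskal--Katona, your dichotomy cannot be closed.
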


To prove this, we aim to show that the set $S$ given by Theorem~\ref{th8}
 is a vertex cover of $H$. We prove the following
consequence of Claim~1:

\medskip

{\bf Claim~3.} {\em Let $\Delta_u = (t - 1){n-u \choose r -
1 - u}$. Then for each $\delta > 0$, there exists $G
\subset F$ with $|G| \sim |F|$ such that for any $u$-set
$e \subset V(G)$ with $u < r$ and $d_G(e)> 0$,} either
\begin{center}
\begin{tabular}{lp{5in}}
(i) & $|e \cap S| = 0$ and $d_G(e) \geq (1 - \delta)\Delta_u$ or \\
(ii) & $|e \cap S| = 1$ and $d_G(e) \geq r(s+t) n^{r - 1 - u}$.\\
\end{tabular}
\end{center}

\begin{proof}
Let $S$ be the $(t-1)$-set given by Theorem~\ref{th8} and
$K$ be the set of edges of $F$ containing some $e \in
\partial F - S$ with $d_F(e) = t - 1$.  By Claim~1, $|K|
\sim |F|$. Also, every $r$-set in $K$ has one
point in $S$ and $r - 1$ points in $V(K) \backslash S$.
Since $d_K(e) = t - 1$ for all $e \in \partial K - S$,
every $u$-set in $V(K) \backslash S$ has degree at most
$\Delta_u$ in $K$.

\medskip

We repeatedly delete edges from $K$ as follows. Suppose
at some stage of the deletion we have a hypergraph $K'$.
If there exists a $u$-set $e$ for some $u<r$ such that
\begin{center}
\begin{tabular}{lp{5in}}
(i') & $|e \cap S| = 0$ and $d_{K'}(e) < (1 - \delta)\Delta_u$ or  \\
(ii') & $|e \cap S| = 1$ and $d_{K'}(e) < r(s+t) n^{r - 1 - u}$\\
\end{tabular}
\end{center}
then delete all edges of $K'$ containing $e$. Let $G$ be
the hypergraph obtained at the end of this process. We
shall prove $|G| \sim |K|$. To this end, suppose that
$|G| = |K| - \eta(t - 1){n \choose r - 1}$, and we show
$\eta = o(1)$ to complete the proof. Consider two cases.

\medskip

{\bf Case 1.} {\em At least $\frac{\eta}{2}(t - 1){n
\choose r - 1}$ edges of $K$ were deleted due to (ii').}

\medskip

In this case, there exists $u < r$ such that the set $H'$
of edges of $K$ deleted due to (ii') on $u$-sets satisfies
$|H'| \geq \frac{\eta}{2r}(t - 1){n \choose r - 1}$. Then by (ii'), and since the number of $u$-sets
with one vertex in $S$ is $|S|{n - |S| \choose u - 1}$,
\[ |H'| \leq |S|{n - |S| \choose u - 1} \cdot r(s+t) n^{r - 1 - u}
< |S|r(s+t) n^{r - 2}.\] Since $|H'| \geq
\frac{\eta}{2r}{n \choose r - 1}$ and $|S| = t-1$, this
gives $\eta = o(1)$.

\medskip

{\bf Case 2.} {\em At least $\frac{\eta}{2}(t - 1){n
\choose r - 1}$ edges of $K$ were deleted due to (i').}

\medskip

In this case, there exists $u < r$ such that the set $H'$
of edges of $K$ deleted due to (i') on $u$-sets satisfies
$|H'| \geq \frac{\eta}{2r}(t - 1){n \choose r - 1}$. Let
$U_1$ be the set of $u$-sets in $V(K) \backslash S$ on
which edges of $K$ were deleted due to (i'), and let
$U_2$ be the remaining $u$-sets in $V(K) \backslash S$.
Then
\[ |U_1| > \frac{|H'|}{(1-\delta)\triangle_u} \geq \frac{\eta (t - 1){n \choose r - 1}}{2r(t - 1){n \choose r - 1 - u}}.\]
If $n$ is large enough, then this is at least
$\frac{\eta}{4r{r - 1 \choose u}}{n \choose u}$. Let
$\gamma = \frac{\eta}{4r{r - 1 \choose u}}$. Then
\begin{eqnarray*}
|K|{r - 1 \choose u} &=& \sum_{e \in {V(K)
\backslash L \choose u}} d_K(e) \\
&=& \sum_{e \in U_1} d_K(e)
+ \sum_{e \in U_2} d_K(e) \\
&\leq& |U_1| (1 - \delta)\Delta_u
+ |U_2|\Delta_u \\
&\leq& \gamma(1 - \delta) {n \choose u}\Delta_u + (1 - \gamma){n \choose u}\Delta_u \; = \; (1 - \gamma \delta){n \choose u}\Delta_u.
\end{eqnarray*}
Here we used $|U_1| + |U_2| \leq {n \choose u}$.
Therefore
\[ |K| \leq (1 - \gamma \delta) \frac{{n \choose u}
\Delta_u}{{r - 1 \choose u}} = (1 - \gamma \delta)(t
- 1){n \choose r - 1}.\] Since $|K| \sim |F| \sim (t
- 1){n \choose r - 1}$, $\gamma \delta = o(1)$. Since
$\delta > 0$ and $\gamma = \frac{\eta}{4r{r - 1 \choose
u}}$, this implies $\eta = o(1)$, as required.
\end{proof}

\medskip

Let $\mathcal{T} \in \mathcal{T}_{s,t}(a,b)$ have a critical
leaf with $\tau(\mathcal{T})=t \le s$, $a+b=r$, $b<a<r-1$,  and let $H$ be a $\mathcal{T}$-free $n$-vertex $r$-graph with
$|H| \geq {n \choose r} - {n - t + 1 \choose r}$. We aim to show that $S$ is a vertex cover of $H$, which
gives $H \cong \Psi_{t-1}(n,r)$, as required. To this end,  let
$H_i = \{e \in H : |e \cap S| = i\}$. So we have to show $H_0 = \emptyset$.

Since $\mathcal{T}$ has a critical leaf, there is a $b$-set $e'$ of $\mathcal{T}$ in the part of size $t$
with $d_{\mathcal{T}}(e')=1$. Let $\mathcal{T'}$ be the tree obtained from $\mathcal{T}$ by deleting the edge containing $e'$.  So  $V(\mathcal{T'})$
has one part comprising $t-1$ sets, each of size $b$ and the other part comprising $s$ sets, each  of size $a$. It has a crosscut of size $t-1$ by picking one vertex from each of the $b$-sets above.

Let ${\mathcal K}^1$ be the set of $r$-sets of $[n]$ that have exactly one vertex in $S$.
A subfamily $T \subset {\mathcal K}^1$ is a {\em potential tree} if
\newline\quad
1) $T \cong \mathcal{T}'$
\newline\quad
2) the $t-1$ vertices of $S$ play the role of the crosscut vertices of $\mathcal{T}'$ described above
\newline\quad
3) $e_0$ is an $a$-set in $V(T)$ with $e_0 \in \partial_aH_0$
\newline\quad
4) $e_0 \subset e \in H_0$
\newline\quad
5) $T \cup e$ is a copy of $\mathcal{T}$.

Fix an $a$-set $e_0 \in \partial_a H_0$ and suppose $e_0 \subset e \in H_0$. If $T \subset H_1$ is a potential tree as described above, then $T \cup \{e\}$ is a copy of $\mathcal{T}$ in $H$, a contradiction. So for each such
potential tree $T$, there exists $f \in T-H_1$. Let us call this a {\em missing edge}.
Let $m = as+bt-b$ be the number of vertices of each potential tree.
The number of potential trees containing a fixed missing edge $f$ is at most
\[  {n-|S|-(a+b-1) \choose m-|S|-(a+b-1)} \cdot c(\TT),\]   
where $c(\TT)$ is the number of ways we can put a potential tree using $f$ into the set $M$ with
$|M|=m$ and
  $(S\cup f)\subset M\subset [n]$,
(note that $|f\cap S|=1$).

On the other hand, each $e_0\in \partial_a H_0$  and a subset $M'$ with $|M'|=m$ and
  $S\subset M'\subset ([n]-e_0)$ carries at least one potential tree so  the total number of potential trees is at least 
\[ |\partial_a H_0|  {n - |S|-a\choose m-|S|-a}.\]

It follows that the number of missing edges is at least $c|\partial_a H_0| n^{b - 1}$ for some $c> 0$. Therefore
\[ |H| = |H_0| + |H_1| + |H_2| + \dots + |H_r| \leq {n \choose r} - {n - t + 1 \choose r} + |H_0| - c |\partial_a H_0| n^{b - 1}.\]
By Proposition~\ref{general-bound} and the fact that $\mathcal{T}$ is contained  in a tight tree on $V(\mathcal{T})$, $|H_0| < c'|\partial H_0|$
for some constant $c'$.

Next, we observe that $\partial H_0 \cap \partial
G = \emptyset$, for otherwise we can use Claim~3 to greedily build a copy of $\mathcal{T}$
using the edge of $H_0$, and whose remaining
edges form a copy of $\mathcal{T}'$ and come from $G$. In particular, since $|\partial G|
\sim {n \choose r - 1}$, $|\partial H_0| = o(n^{r - 1})$.
Writing $|\partial H_0| = {x \choose r - 1}$ for some
real $x$, we have $|\partial_a H_0| \geq
{x \choose a}$, by the Kruskal-Katona Theorem. Therefore
\[ |H_0| - c |\partial_a H_0| n^{b - 1} \leq c'|\partial H_0| -  c |\partial_a H_0| n^{b - 1}
\leq c'{x \choose r - 1} - c n^{b - 1} {x \choose a}.\]
Since $x = o(n)$, for large enough $n$ the above
expression is negative, unless $|\partial H_0| =
|\partial_a H_0| = 0$. We have shown that if $|H| \geq {n
\choose r} - {n - t+ 1 \choose r}$, then $H_0 =
\emptyset$ and $|H| = {n \choose r} - {n - t + 1
\choose r}$, as required. \qed

\section{Concluding remarks}\label{se:7}

In this paper we determined for $b \leq a < r$
the asymptotic behavior of $\ex_r(n,\mathcal{T})$ when
$\mathcal{T} \in \mathcal{T}_{s,t}(a,b)$ is an
$(a,b)$-blowup of a tree $T$ with parts of sizes $s$ and
$t$ where $s \geq t$ and $\sigma(\mathcal{T})=t$. The extremal problem appears to be
more difficult when $s < t$, in which case the smallest
crosscut of $\mathcal{T}$ has size $s$. We pose
Conjecture \ref{general}, which covers all cases except
$a = r - 1$.

\begin{conjecture}\label{general}
If $\mathcal{T} \in \mathcal{T}_{s,t}(a,b)$ where $b \leq
a < r - 1$, $\sigma = \sigma(\mathcal{T}) = \min\{s,t\}$,
and $H$ is a $\mathcal{T}$-free $n$-vertex $r$-graph,
then for large enough $n$, $|H| \leq (\sigma - 1){n
\choose r - 1} + o(n^{r - 1})$, with equality only if $H$
is isomorphic to a hypergraph obtained from
$\Psi_{\sigma -1}(n,r)$ by adding or deleting $o(n^{r -
1})$ edges.
\end{conjecture}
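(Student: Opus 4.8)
The plan is to bootstrap the asymptotic count of Theorem~\ref{th:main} into rigid structural information and then finish with a Kruskal--Katona compression. First I would apply Corollary~\ref{mainprime} (legitimate because $|H|\sim(t-1)\binom{n}{r-1}$ forces the hypothesis~\eqref{asymptotic} for a slowly growing $m=o(n^{1/(t+1)})$) to replace $H$ by a subgraph $F\subseteq H$ with $|F|\sim|H|$ that carries a crosscut $L$ of size $O(m)$. Since every edge of $F$ meets $L$ in exactly one vertex, each $(r-1)$-set $e\in\partial F-L$ has its whole neighborhood $\Gamma_F(e)$ inside $L$, so $d_F(e)\le|L|=O(m)$; this is the toehold that makes the rest of the argument local to $L$.

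The first real step is to show that almost every $e\in\partial F-L$ satisfies $d_F(e)=t-1$ exactly. If a positive-density family of such $e$ had $d_F(e)\ge t$, then by pigeonhole a single $t$-subset $S\subseteq L$ would lie in $\Gamma_F(e)$ for more than $\ex(n,\mathcal{T}^*)$ of them, where $\mathcal{T}^*$ is the $(a,b-1)$-blowup of $T$; these $(r-1)$-sets would then contain a copy of $\mathcal{T}^*$, and adjoining one vertex of $S$ to each $(b-1)$-set lifts it to a copy of $\mathcal{T}\subseteq F\subseteq H$. The needed bound $\ex(n,\mathcal{T}^*)=O(n^{r-2})$ is exactly Proposition~\ref{general-bound}. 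Combining this upper estimate with the lower bound forced by $|F|\sim(t-1)\binom{n}{r-1}$ pins the typical degree at $t-1$ and leaves $(1-o(1))\binom{n}{r-1}$ shadow sets, each with a prescribed $(t-1)$-element neighborhood contained in $L$.

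Next I would sort these good shadow sets by their neighborhood $S_i$, an arbitrary $(t-1)$-subset of $L$, into families $F_i$ with $\sum_i|F_i|\sim\binom{n}{r-1}$, discard the negligible ones, and clean up $a$-sets and $b$-sets of small degree to obtain families $G_i$ of essentially the same size obeying a robust minimum-degree condition (every $a$-set of $\partial_aG_i$ and every $b$-set of $\partial_bG_i$ has degree $\gg n^{r-2-a}$, resp.\ $\gg n^{r-2-b}$). The crucial separation claim is that the $a$-shadows $\partial_aG_i$ are pairwise disjoint. Here the critical leaf enters: if some $a$-set $e$ lay in $\partial_aG_i\cap\partial_aG_j$, let $W=T-x$ for a leaf $x$ in the $t$-part with neighbor $y$; I would greedily grow $W(a,b-1)$ inside $G_j$ with $e$ playing the $a$-blowup of $y$, complete it to $W(a,b)$ using vertices of $S_j$, and then build the missing leaf edge from an edge $f\in G_i$ through $e$ (chosen disjoint from the rest by the minimum-degree condition) together with a vertex $x'\in S_i\setminus S_j$, assembling a full $\mathcal{T}$. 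It is precisely here that the hypotheses $b<a$ (so that $\mathcal{T}^*$ and $W(a,b-1)$ are genuine $(a,b-1)$-blowups that grow robustly) and the existence of a critical leaf are indispensable.

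Finally, because $a\le r-2$, disjointness of the $a$-shadows upgrades to disjointness of the $(r-2)$-shadows $\partial_{r-2}G_i$, and the proof closes by Kruskal--Katona: writing $|G_i|=\binom{y_i}{r-1}$, the Lov\'asz form gives $|\partial_{r-2}G_i|\ge\binom{y_i}{r-2}$, so $\sum_i\binom{y_i}{r-2}\le\binom{n}{r-2}$, while $\sum_i|G_i|\sim\binom{n}{r-1}$ forces the largest parameter to satisfy $y_1\ge n-o(n)$. Thus one family $F_1$ already accounts for $(1-o(1))\binom{n}{r-1}$ edges, and its common neighborhood $S:=S_1$ is the desired set of $t-1$ vertices meeting all but $o(n^{r-1})$ edges of $H$. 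I expect the main obstacle to be the separation step: making the two-neighborhood greedy embedding close up without reusing vertices, while simultaneously exploiting the critical leaf, the strict inequality $b<a$, and the robust degree cleaning, is the delicate part, whereas the concluding Kruskal--Katona estimate is essentially bookkeeping.
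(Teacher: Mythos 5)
The statement you are trying to prove is Conjecture~\ref{general}, which is posed as an \emph{open problem} in the paper --- there is no proof of it to match, and your proposal does not close it. What you have written is essentially a replay of the paper's own proof of its stability theorem (Theorem~\ref{th8}): Corollary~\ref{mainprime} to extract a crosscut $L$, the pigeonhole-plus-lifting argument pinning typical shadow degrees at $t-1$, the degree-cleaned families $G_i$, the critical-leaf separation claim for the $a$-shadows, and the Kruskal--Katona finish. That argument is valid, but only under the hypotheses the paper imposes and the conjecture deliberately drops: $t\le s$, strict inequality $b<a$, and the existence of a critical leaf. You even invoke the critical leaf explicitly in your separation step, yet the conjecture makes no such assumption, so your argument cannot be run on a generic $\mathcal{T}\in\mathcal{T}_{s,t}(a,b)$.

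Concretely, three regimes covered by the conjecture are untouched by your plan. First, when $s<t$ one has $\sigma=\min\{s,t\}=s$, and neither Theorem~\ref{th:main} nor Corollary~\ref{mainprime} applies: Theorem~\ref{th:main} only yields the bound $(t-1)\binom{n}{r-1}+o(n^{r-1})$, which is strictly weaker than the conjectured $(s-1)\binom{n}{r-1}+o(n^{r-1})$, so even the \emph{asymptotic upper bound} --- the very first step of your bootstrap --- is unavailable; the template Lemma~\ref{eglift} is asymmetric (the $b$-side is a matching and degrees are controlled on $a$-sets), and one cannot simply swap the roles of the two sides. The paper itself flags this: ``the extremal problem appears to be more difficult when $s<t$.'' Second, the conjecture allows $a=b$, but the proof of Lemma~\ref{eglift} estimates
\[
d_{H_0}(e) \;\leq\; \bigl|V(h(\mathcal{T'})) \cap V(A)\bigr| \cdot n^{a-b-1}\cdot \delta n^{b},
\]
which needs $a-b-1\ge 0$; the paper states ``it is crucial here that $b<a$.'' For $a=b$ the only available tool is the Frankl-type partition bound~\eqref{eq:r/2}, which matches the conjectured value only for special trees (e.g.\ odd paths) and, as the paper notes, does not reveal the extremal structure. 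Third, without a critical leaf your separation claim has no leaf edge to reattach, and the stability statement (Theorem~\ref{th8}) is only proved assuming one exists. So your proposal establishes nothing beyond what the paper already proves, and the conjecture remains open precisely where your machinery breaks.
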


\medskip 
 {\bf The case $a=r-1$.} \quad
If $t > s$ (and $n\geq |V(\mathcal{T})|$),  then
$\Psi^1_{t-1}(n,r)$ contains $\mathcal{T}$ so Conjecture~\ref{general} does not hold.
Since $\Psi_{s-1}^1(n,r)$ does not contain $\mathcal{T}$, it is
natural to ask whether $\Psi^1_{s-1}(n,r)$ is (asymptotically) extremal for
$\mathcal{T}$.  In some cases when $a=r-1$, this is certainly not so because
certain Steiner systems do not contain a blowup of a star $K_{1,t}$ and are denser than $\Psi_{s-1}(n,r)$. More precisely:
\Tab
Suppose $a=r-1$ and let $\lambda= \max_{x\in U} \deg_T(x)$.
 Then $\ex(n,\mathcal{T})$ is at least the
number of edges in a Steiner $(n,r,r-1,\lambda-1)$-system -- an
$r$-graph on $n$ vertices where each $(r - 1)$-set is
contained in exactly $\lambda - 1$ edges.
In this case,
$\ex(n,T(r-1,1)) \geq \frac{\lambda-1}{r}{n \choose r - 1}$ for
infinitely many $n$ (due to the existence of those
designs~\cite{Keevash}) whereas $\sigma(T) = s$ and it could be much less than $ \frac{\lambda-1}{r}$. 

\medskip

{\bf No stability for  $a = r - 1$.}\quad
It is important in the above proof that $a
\neq r - 1$. If $a = r - 1$, then there is no
stability theorem: consider for instance an
$(r-1,1)$-blowup $\mathcal{T}$ of a path with four edges.
Let $H$ be the $n$-vertex $r$-graph constructed as
follows. Let $V(H) = [n]$, let $G_1 \sqcup G_2$ be a
partition of the edge set of the complete $(r - 1)$-graph on $\{ 3,4, \dots, n\}$, and let
 $H$ consist of the edges $e \cup \{i\}$ such that
$e \in G_i$, for $i \in \{1,2\}$. Then $|H| = {n - 2
\choose r-1}$ and $H$ does not contain $\mathcal{T}$.

{\bf The case  $a=b=r/2$.}\quad
Let $T$ be a tree on $s+t$ vertices then for  $\TT=T(r/2,r/2)$
one can use an argument of Frankl~\cite{Frankl} (applied by many others, see~\cite{MV})
to prove that
\begin{equation}\label{eq:r/2}
  \ex_r(n,\TT) \leq \frac{\ex(\lfloor 2n/r\rfloor, T)}{\binom{\lfloor 2n/r \rfloor}{2}} {n\choose r}
   \sim  \frac{\ex(\lfloor 2n/r\rfloor, T)}{\lfloor 2n/r \rfloor} {n\choose r-1} .
\end{equation}
Indeed, similarly to the idea of templates, given a $\TT$-free $r$-graph $H$ on $n$
vertices take a
random partition of $[n]$ into $r/2$-sets, (where for simplicity $r/2$ divides $n$),
and consider only those $r$-edges of $H$ which are unions of two partite sets.
Then this subfamily consists of at most $\ex(2n/r, H)$ edges of $H$, out of the possible $\binom{2n/r}{ 2}$.

The bound is asymptotically tight, due to $\Psi^1_{t-1}(n,r)$,
  if $\sigma(\TT)=t$ and $T$ has $2t-1$ edges.
So the inequality~\eqref{eq:r/2} completes the proof of Theorem~\ref{th:path}
 showing that $ \ex_r\left(n, P_{2k-1}\left(\frac{r}{2},\frac{r}{2}\right)\right) \sim (k-1)\binom{n}{r-1}$
 (the other cases follow from Theorems~\ref{th:main} and~\ref{mainexact}).
It also gives a better upper bound for the even length, $\ex_r\left(n, P_{2k}\left(\frac{r}{2},\frac{r}{2}\right)\right) \leq (1+o(1))\left(k-\frac{1}{2}\right)\binom{n}{r-1}$.

However, the proof of~\eqref{eq:r/2}  does not reveal the extremal structure.

{\bf The case of forests.}\quad
Many of our ideas can be generalized for the case of $\TT=F(a,b)$, when $F$ is a {\em forest},
 but we do not have a general conjecture. 

\begin{problem}\label{forest}
Given $a,b\geq 1$ and a forest $F$ on $s+t$ vertices.
Determine $\lim_{n\to \infty} \ex(n, F(a,b))\binom{n}{r-1}^{-1}$.
    \end{problem}

{\bf Other bipartite graphs.}\quad
The class of $(a,b)$-blowups of bipartite
graphs contains well-studied instances including blowups
of complete bipartite graphs. In particular,
F\"{u}redi~\cite{Furedi} made the following conjecture
for blowups of a 4-cycle. Let $\mathcal{C}_4^r=\{C_4(a,b): a+b=r, a,b>0\}$.

\begin{conjecture}[\cite{Furedi}] If $r \geq 3$ then
$ \ex(n,\mathcal{C}_4^r) \sim  \dbinom{n}{r - 1}$.
\end{conjecture}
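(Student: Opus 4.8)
The plan is to separate the easy lower bound from the hard upper bound. For the lower bound, take $H=\Psi_1(n,r)$, the family of all $r$-sets through a fixed vertex $x$, so that $|H|=\binom{n-1}{r-1}\sim\binom{n}{r-1}$. No $C_4(a,b)$ embeds in $H$: the two opposite edges of any copy are vertex-disjoint (the four blocks of a blowup are pairwise disjoint), so they cannot both contain $x$, whereas every edge of $H$ contains $x$. This gives $\ex(n,\mathcal{C}_4^r)\ge\binom{n-1}{r-1}$, matching the claimed value from below, so everything rests on the matching upper bound.

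For the upper bound I would adapt the asymmetric template method of Section~3. Fix a split $a+b=r$ and, via a random partition of $[n]$ into $n/r$ blocks of size $a$ (forming a matching $A$) and $n/r$ blocks of size $b$ (forming a matching $B$), build an $(a,b)$-template $(A,B)$. The bipartite graph $H_0(A,B)$ has an edge $(U,V)$ exactly when $U\cup V\in H$, and a $K_{2,2}$ in $H_0$ is precisely a copy of $C_4(a,b)$: since $A$ and $B$ are matchings with $V(A)\cap V(B)=\emptyset$, the four blocks $U_1,U_2,V_1,V_2$ are automatically pairwise disjoint, which is exactly the disjointness a blowup demands. Thus it suffices to force a $K_{2,2}$ in some $H_0(A,B)$, and by the K\H{o}v\'ari--S\'os--Tur\'an theorem this happens once $|H_0(A,B)|$ exceeds the $C_4$-threshold $\Theta\big((n/r)^{3/2}\big)$. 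A first-moment computation gives $\mathbb{E}|H_0(A,B)|=\Theta(n^{2-r})\,|H|$, so a single split already produces a copy of $C_4(a,b)$ as soon as $|H|=\omega(n^{r-1/2})$.

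The hard part is closing the gap between $n^{r-1/2}$ and the conjectured $\binom{n}{r-1}=\Theta(n^{r-1})$. No single split can do better: optimizing the block counts $n_a,n_b$ in the template above, the forcing condition always reduces to $|H|>n^{r}/\sqrt{\max(n_a,n_b)}=\Omega(n^{r-1/2})$, and the balanced case treated by Frankl's argument in~\eqref{eq:r/2} stalls at the same place. The obstruction is structural: the matching trick secures disjointness only by restricting attention to the $O(n)$ many $(r-1)$-sets lying inside a single matching, discarding almost all of the $\binom{n}{r-1}$ candidate sets, and the complementary codegree estimate (that $H_u\cap H_v$ is \emph{intersecting}, hence of size $O(n^{r-2})$, for all pairs $u,v$) likewise degrades to $n^{r-1/2}$ once summed over the $\binom{n}{2}$ pairs. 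To gain the missing factor $\sqrt{n}$ one must use all $r-1$ forbidden blowups at once, coupling the disjoint-codegree constraints at the different levels $a=1,\dots,r-1$ --- for instance through a $\Delta$-system decomposition of $\partial H$ or an entropy/convexity argument that sees the \emph{intersecting} pairs the matchings miss. Reducing the conjecture to one clean such inequality is the key step I would pursue, and is precisely the point at which the tree machinery of Theorems~\ref{th:main}--\ref{mainexact} fails, since $C_4$ carries a cycle and the greedy extension of Lemma~\ref{eglift} cannot close it; this is why the statement remains a conjecture.
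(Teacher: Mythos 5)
The statement you were asked to prove is not a theorem of this paper at all: it is F\"uredi's conjecture from~\cite{Furedi}, quoted in the concluding remarks as an \emph{open problem}, and the paper contains no proof of it. So there is no paper proof to compare against; the only thing the paper records about it is the current best upper bound of Pikhurko and Verstra\"ete~\cite{PV}, namely $\ex_r(n,\mathcal{C}_4^r) \lesssim (1 + \frac{2}{\sqrt{r}})\binom{n}{r-1}$, together with $\ex_3(n,C_4(2,1)) \lesssim \frac{13}{9}\binom{n}{2}$. Your proposal is honest about this situation, and the one thing you do prove is correct: $\Psi_1(n,r)$ contains no $C_4(a,b)$, since the opposite edges $U_1\cup V_1$ and $U_2\cup V_2$ of any blowup of $C_4$ are disjoint and so cannot both contain the fixed vertex, giving $\ex(n,\mathcal{C}_4^r)\geq\binom{n-1}{r-1}$. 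Your single-split template computation is also sound as far as it goes: a $K_{2,2}$ in $H_0(A,B)$ with $A,B$ disjoint matchings is indeed a copy of $C_4(a,b)$, and the K\H{o}v\'ari--S\'os--Tur\'an threshold forces one as soon as $|H|=\omega(n^{r-1/2})$, so this method only yields $\ex(n,\mathcal{C}_4^r)=O(n^{r-1/2})$.

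The genuine gap is therefore the entire upper bound, which is exactly the open content of the conjecture; no proof attempt could have closed it, and yours does not claim to. One factual correction to your diagnosis of the obstruction, though: you present the state of the art as stalled a factor $\sqrt{n}$ above the conjectured value, but that is only where \emph{your} two candidate methods (the single random split, and Frankl's partition argument~\eqref{eq:r/2} applied to the graph $C_4$) stall. The bound of~\cite{PV}, cited in the paper immediately after the conjecture, is already within the constant factor $1+2/\sqrt{r}$ of $\binom{n}{r-1}$, and it does so precisely by exploiting the whole family $\mathcal{C}_4^r$ of blowups simultaneously --- in line with your intuition that all levels $a=1,\dots,r-1$ must be coupled, but far beyond the $n^{r-1/2}$ barrier you describe. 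Your final observation is correct and worth keeping: the machinery of Theorems~\ref{th:main} and~\ref{mainexact} and Lemma~\ref{eglift} is intrinsically a tree argument (feasible embeddings are grown greedily edge by edge), and it cannot close the cycle in $C_4$, which is why the paper's results say nothing about this conjecture.
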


The current record is due to
Pikhurko and the last author~\cite{PV}, who showed
\[ \ex_r(n,\mathcal{C}_4^r) \lesssim (1 + \frac{2}{\sqrt{r}}){n
\choose r - 1}\]
and $\ex_3(n,C_4(2,1)) \lesssim \frac{13}{9}{n \choose
2}$. When $G$ is an even cycle
of length six or more, it is only known~\cite{JL} that $\ex_r(n,G(a,b))=\Theta(n^{r-1})$ and
 the asymptotic behavior of $\ex_r(n,G(a,b))$ is not known. One can show, however, that
for $F = K_{s,t}(a,b)$ with $a + b = r$, $b \leq a$, and
$t$ sufficiently large as a function of $s$ and $r$,
\[ \ex_r(n,F) = \Theta(n^{r - \frac{1}{s}})\]
via a randomized algebraic construction.

\newpage
\small

\end{document}